\numberwithin{equation}{section}
\theoremstyle{plain}
\newtheorem{theorem}{Theorem}[section]
\newtheorem{lemma}[theorem]{Lemma}
\newtheorem{proposition}[theorem]{Proposition}
 \theoremstyle{definition}
\newtheorem{Rem}[theorem]{Remark}
\newtheorem{?}[theorem]{Problem}
\newcommand{\R}{\mathbb{R}}
\newcommand{\D}{\mathbb{D}}
\begin{document}

\title{The Narrow Capture Problem on General Riemannian Surfaces}

\author[M. Nursultanov]{Medet Nursultanov}
\address {Department of Mathematics and Statistics, University of Helsinki}
\email{medet.nursultanov@gmail.com}

\author[W. Trad] {William Trad}
\address {School of Mathematics and Statistics, University of Sydney}
\email{w.trad@maths.usyd.edu.au}

\author[J. C. Tzou] {Justin Tzou}
\address {School of Mathematical and Physical Sciences, Macquarie University}
\email{tzou.justin@gmail.com}

\author[L. Tzou] {Leo Tzou}
\address {University of Amsterdam}
\email{Leo.tzou@gmail.com}


\subjclass[2010]{Primary: 58J65 Secondary: 60J65, 60J70, 35B40, 92C37}

\keywords{Narrow capture problem, Mean first-passage time, Brownian motion}

\begin{abstract}
	In this article, we study the narrow capture problem on a Riemannian 2-manifold. This involves the derivation of the mean first passage (sojourn) time of a surface-bound ion modelled as a Brownian particle. We use a layer potential argument in conjunction with microlocal analysis in order to derive the leading order singularity as well as the $O(1)$ term of the mean first passage time and the associated spatial average. 

\end{abstract}
\maketitle
\section{Introduction}
We consider a Brownian particle bound to a surface which contains a small trap denoted $\Gamma_\varepsilon$. The narrow capture problem deals with the time required for such a particle to first encounter the trap. This time is called the first-passage time and is denoted $\tau_{\Gamma_{\varepsilon}}$. Starting from an initial location $x$ on the surface, the \emph{expected time} that a particle will wander before being captured by the trap is called the mean first-passage time and is denoted $u_\varepsilon(x)$. 

The narrow capture problem along with the closely related narrow escape problem (where the traps are small windows on the otherwise reflective boundary of the search domain) have been used as simple, prototypical models for various processes involving diffusive search such as a diffusing ion inside a cell escaping through an ion channel on the cell membrane (see \cite{pillay2010asymptotic, holcman2004escape, schuss2007narrow, benichou2008narrow, bressloff2013stochastic} and references therein). While we highlight some results below, we refer the reader to \cite{holcman2015stochastic, bressloff2014stochastic, HS2014} for a more comprehensive description of results along with their applications to cellular and molecular biology.

On a flat, bounded domain $\Omega$, it was shown in \cite{holcman2004escape} and \cite{singer2006narrow} that the mean escape time had the leading order expansion $u_\varepsilon \sim -|\Omega|\log\varepsilon + O(1)$ as $\varepsilon \to 0$. 
Within \cite{Singer2006}, similar expansions were derived for the sphere and annulus with absorbing windows near singular points. More precisely, it was found that in geometric configurations where the absorbing window was near a corner, the mean escape time had an expansion of the form $u_\varepsilon \sim \frac{|\Omega|}{\alpha}\log\varepsilon + O(1)$ where $\alpha$ denotes the angle of the corner. Furthermore, it was found that when the absorbing window was near a cusp point, the mean escape time had an expansion given by $u_\varepsilon \sim \frac{|\Omega|}{\varepsilon}+O(1)$. It is emphasized that in the above works, the leading order term was determined and the extent to which the remainder terms were understood was $O(1)$. 

In \cite{pillay2010asymptotic}, a matched asymptotic method was employed to determine the $O(1)$ term in the expansion in terms of a certain Green's function that encoded information on the geometry of $\Omega$, the locations of the traps, and the initial position $x$. This method, developed in \cite{ward1993strong}, effectively summed all logarithmic correction terms in the expansion of $u_\varepsilon$, with the resulting error term being transcendentally small in $\varepsilon$. In \cite{bressloff2015escape}, a more detailed model was considered in which the windows were allowed to open and close stochastically, more closely mimicking the behavior of cell ion channels. Other quantities of interest aside from the mean first passage time include the variance of the first passage time \cite{KTCW2015} as well as the so-called extreme first passage time, the minimum search time achieved by a large group of searchers \cite{madrid2020competition}. 

In three dimensions, the narrow escape time from a Euclidean, bounded domain $\Omega$ with one circular trap of radius $\varepsilon$ on its boundary was shown \cite{singer2006narrow, singer2008narrow} to have the leading order expansion $u_\varepsilon \sim |\Omega|(4\varepsilon)^{-1}(1-\varepsilon \pi^{-1} H\log\varepsilon ) + O(1)$, where $H$ is the mean curvature of $\partial \Omega$ at the center of the trap. In \cite{cheviakov2010asymptotic}, a matched asymptotic analysis similar to that employed in \cite{pillay2010asymptotic} was used to compute the $O(1)$ correction term in the expansion for the special case in which $\Omega$ was the unit ball. In \cite{NTT2021}, the calculation of this correction term was generalized using geometric microlocal methods to Riemannian 3-manifolds with smooth boundary containing either a circular or elliptical trap.

The narrow capture problem also has wide applications in cellular biology \cite{CW2011,CSW2009}. For example, a diffusing molecule must arrive at a localized signaling region within a cell or on its surface before a signaling cascade can be initiated. In another example, a T cell may diffuse in search of an antigen-presenting cell to trigger an immune response.  In this latter example, determining the duration of this search  is relevant to understanding immune response time \cite{CSW2009, delgado2015conditional}.

The matched asymptotic methods of \cite{ward1993strong} that were used for the aforementioned narrow escape problems have also been successfully applied to narrow capture problems in Euclidean metrics. In earlier works, the closely related problem of computing the fundamental Neumann eigenvalue $\lambda_0$ for the Laplacian in Euclidean two- and three-dimensional domains with small traps was considered in \cite{cheviakov2011optimizing, kolokolnikov2005optimizing}. The spatial average of the mean first passage time was shown to be proportional to $1/\lambda_0$ (see \cite{kolokolnikov2005optimizing, ward1993strong}), and in \cite{RC2019}, a numerical algorithm was employed to optimize this quantity with respect to configurations of traps located in the domain.

Extensions of these works include computing the full probability distribution (i.e., all moments as opposed to just the first)  of the first passage time \cite{lindsay2016hybrid, B20212d, B20213d, bressloff2022narrow}, stochastic resetting \cite{B20212d, B20213d, bressloff2022narrow}, moving traps \cite{tzou2015mean, lindsay2017optimization, iyaniwura2021simulation}, partially absorbing traps \cite{LBW2017, bressloff2022narrow}, traps grouped in clusters \cite{KTCW2015, iyaniwura2021asymptotic}, and the effect of advection \cite{KTCW2015, NTT2021force}.

In a non-Euclidean  geometry, \cite{CSW2009} considered the mean first passage time of a Brownian particle on a sphere containing small absorbing traps. Explicit results were obtained through employment of the aforementioned matched asymptotic method along with the known analytic formula for the Neumann Green's function for the Laplacian on sphere. The spherical geometry considered was meant to approximate the geometry of a cell with receptor clusters on its surface awaiting the arrival of surface-bound signaling molecules. A more detailed model of a cell, however, would be non-spherical. In fact, a cell's geometry  can be crucial to the manner in which it serves its function \cite{fernandes2019cell}. It is with this motivation that we develop here a rigorous mathematical framework for narrow capture problems posed on non-Euclidean and non-spherical geometries.

We now mathematically formulate the narrow capture problem. Let $(M,g)$ be a compact, connected, orientable, Riemannian surface with smooth boundary, $\partial M$. First, we assume the boundary to be empty and calculate the associated mean sojourn time as well as its spatial average. When the manifold is of non-empty boundary, we assume, without loss of generality that $M$ is a connected open subset of a compact orientable, Riemannian manifold $(\tilde{M}, g)$ without boundary. Let $(X_t, \mathbb{P}_t)$ be the Brownian motion on $M$ generated by the (negative) Laplace-Beltrami operator $\Delta_g=-d^*d$. We use $\Gamma_\varepsilon\subset M$ to denote a trap with radius $\varepsilon>0$ and we denote by $\tau_{\Gamma_{\varepsilon}}$ the first time the Brownian motion $X_t$ hits $\Gamma_{\varepsilon}$, that is
$$\tau_{\Gamma_{\varepsilon}} := \inf \{ t\geq 0: X_t \in \Gamma_{\varepsilon}\}.$$
Within the narrow capture problem we wish to derive an asymptotic as $\varepsilon \to 0$ for the mean first-passage (sojourn) time which is defined as the expected value: $$u_\varepsilon(x) = \mathbb{E}[\tau_{\Gamma_{\varepsilon}} | X_0 = x].$$ 
An associated quantity of interest is the spatial average of the mean first-passage time:
$$|M_\varepsilon|^{-1} \int_{M_\varepsilon} \mathbb{E}[\tau_{\Gamma_{\varepsilon, a}} | X_0 = x] dvol_g(x),$$
where $M_\varepsilon:= M\setminus \Gamma_\varepsilon$ and $|M_\varepsilon|$ is the Riemannian volume of $M_\varepsilon$ with respect to the metric $g$.

Many works have been devoted to this topic, especially in applied mathematics. In \cite{LL1986}, the mean first-passage time for diffusing particles on a surface of the sphere with one absorbing trap was considered. They obtained the asymptotic, up to the bounded term, for mean first passage time and its average. These results were generalised, in \cite{CSW2009}, for the case of several traps. In \cite{CW2011,LBW2017}, the three-dimensional version of this problem was studied. For domains in $\mathbb{R}^3$, they obtain the asymptotic formulas in terms of capacitance, by using the method of matched asymptotic expansions. We also mention works \cite{BEW2008,RC2019,TW2000,IWMW2021,B20212d,B20213d,SWF2007}, where the authors investigate related problems.

Despite the large number of works on this topic, there are still many questions regarding more general geometries. In this direction, the goal of this paper is to investigate the narrow capture problem for the Riemannian surface. Similar to \cite{AKK2012}, we use a layer potential method, however by adjoining this method with techniques originating from geometric microlocal analysis, we can extend the results, as well as the method to more general geometries, similar to the extension to broader classes of geometries for the narrow escape problem in \cite{NTT2021,NTT2021force}. As mentioned previously, we will consider empty and non-empty boundary cases. For the sake of conciseness, we will present the results and required Green's function for the $\partial M = \emptyset$ case here. For the associated \textit{Neumann Green's function} and results for the $\partial M\neq \emptyset$, see Section 5. 
\begin{equation}\label{G function}
\Delta_g E(x,y) = - \delta_y(x) + \frac{1}{|M|}, \quad E(x,y) = E(y,x), \quad \int_M E(x,y) dvol_g(y) = 0.
\end{equation}
It was already known, see for example \cite{NTT2021} and \cite{taylor2}, that near the diagonal the Green function satisfies 
\begin{equation}\label{dec}
	E(x,y) = -\frac{1}{2\pi} \log d_g(x,y) + P_{-4}(x,y),
\end{equation}
for some $P_{-4}(x,y) \in C^1(M\times M)$ which is infinitely smooth away from the set $\{(x,y)\in M\times M\mid x = y\}$. (In fact, in the language of pseudodifferential operators, we will see that $P_{-4}(x,y)$ is the Schwartz kernel of a pseudodifferential operator of degree $-4$.)

This expansion allows us to obtain the following asymptotic for the narrow capture of Brownian particles in a small trap:
\begin{theorem}\label{main}
	Let $(M,g)$ be a closed orientable Riemannian surface. Fix $x_0\in M$ and let $\Gamma_{\varepsilon}:=B_\varepsilon(x_0)$ be a geodesic ball centred at $x_0$ of geodesic radius $\varepsilon >0$.\\
	\\
	i) For each $x\notin B_\varepsilon(x_0)$, the first-passage time satisfies the  following asymptotic formula, as $\varepsilon\rightarrow 0$,
	\begin{equation*}
	\mathbb{E}[\tau_{\Gamma_{\varepsilon}} | X_0 = x] = -\frac{|M|}{2\pi}\log\varepsilon + |M|P_{-4}(x_0,x_0) - |M| E(x,x_0) + r_{\varepsilon}(x) + O(\varepsilon\log\varepsilon).
	\end{equation*}
	for some function $r_\varepsilon$ such that $\|r_\varepsilon\|_{C(K)} \leq C_{K} \varepsilon$ for any compact $K\subset M$ for which $K\cap \Gamma_\varepsilon = \emptyset$. The Green function $E(x,y)$ is given by \eqref{G function} and $P_{-4}(x_0,x_0)$ is the evaluation at $(x,y)=(x_0,x_0)$ of the $C^1(M\times M)$ function $P_{-4}(x,y)$ in \eqref{dec}.\\
	\\
	ii) Let $M_\varepsilon=M\setminus \Gamma_{\varepsilon}$, then the spatial average of the mean first-passage time satisfies the asymptotic formula, as $\varepsilon\rightarrow 0$,
	\begin{equation*}
	\frac{1}{|M_\epsilon|}\int_{M_{\varepsilon}}\mathbb{E}[\tau_{\Gamma_{\varepsilon}} | X_0 = y]dvol_g(y) = -\frac{|M|}{2\pi}\log\varepsilon +|M|P_{-4}(x_0,x_0) + O(\varepsilon\log\varepsilon).
	\end{equation*}
\end{theorem}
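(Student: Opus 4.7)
My plan is to bypass the full layer-potential / microlocal apparatus and reduce Theorem~\ref{main} to a direct maximum-principle argument, using the decomposition \eqref{dec} as the only nontrivial input. First, I invoke the probabilistic characterization of the mean first-passage time: $u_\varepsilon$ is the unique classical solution of $\Delta_g u_\varepsilon = -1$ on $M_\varepsilon$ with $u_\varepsilon|_{\partial\Gamma_\varepsilon} = 0$. Then I subtract off the singularity at $x_0$ by setting $v_\varepsilon(x) := u_\varepsilon(x) + |M|\,E(x,x_0)$. Since $x_0 \notin M_\varepsilon$, property \eqref{G function} gives $\Delta_g v_\varepsilon = 0$ on $M_\varepsilon$, while on the geodesic circle $\partial\Gamma_\varepsilon = \{x : d_g(x,x_0) = \varepsilon\}$ the expansion \eqref{dec} yields
$$v_\varepsilon|_{\partial\Gamma_\varepsilon} = -\frac{|M|}{2\pi}\log\varepsilon + |M|\,P_{-4}(\cdot,x_0)|_{\partial\Gamma_\varepsilon}.$$

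\textbf{Extracting the $O(1)$ constant.} Setting $C_\varepsilon := -\frac{|M|}{2\pi}\log\varepsilon + |M|\,P_{-4}(x_0,x_0)$, the function $v_\varepsilon - C_\varepsilon$ is harmonic on $M_\varepsilon$ with boundary data $|M|[P_{-4}(\cdot,x_0) - P_{-4}(x_0,x_0)]$, which is uniformly $O(\varepsilon)$ on $\partial\Gamma_\varepsilon$ by the $C^1$-regularity of $P_{-4}$. The maximum principle for $\Delta_g$ on the compact manifold-with-boundary $\overline{M_\varepsilon}$ then gives $\|v_\varepsilon - C_\varepsilon\|_{C(\overline{M_\varepsilon})} \leq C\varepsilon$. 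Taking $r_\varepsilon := v_\varepsilon - C_\varepsilon$ and rearranging recovers part~(i); the quoted $O(\varepsilon\log\varepsilon)$ remainder is slack that absorbs any error already bounded by $C\varepsilon$.

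\textbf{Spatial average.} For part~(ii) I integrate the pointwise expansion over $M_\varepsilon$. The constants pass through unchanged after dividing by $|M_\varepsilon|$, and by the normalization $\int_M E(\cdot,x_0)\,dvol_g = 0$ in \eqref{G function},
$$\int_{M_\varepsilon} E(x,x_0)\,dvol_g(x) = -\int_{\Gamma_\varepsilon} E(x,x_0)\,dvol_g(x) = O(\varepsilon^2|\log\varepsilon|),$$
the last estimate by a short geodesic-polar computation using \eqref{dec}. The integral of $r_\varepsilon$ over $M_\varepsilon$ is $O(\varepsilon |M|)$ by the previous step, and $|M_\varepsilon|^{-1} = |M|^{-1}(1+O(\varepsilon^2))$; combined, these produce the claimed asymptotic for the spatial average.

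\textbf{Main obstacle.} The nontrivial content is packaged entirely in \eqref{dec}: the fact that the off-diagonal remainder $P_{-4}$ extends to a function in $C^1(M\times M)$, including across the diagonal. This is the microlocal/pseudodifferential statement the paper establishes separately via a parametrix construction for $\Delta_g^{-1}$, and it is precisely what allows the pointwise value $P_{-4}(x_0,x_0)$ to appear as the geometry-dependent $O(1)$ coefficient. Once \eqref{dec} is granted, the argument above is a few lines. A more elaborate single-layer ansatz $u_\varepsilon(x) = \int_{\partial\Gamma_\varepsilon} E(x,y)\phi_\varepsilon(y)\,d\sigma_g(y) + C_\varepsilon$ with asymptotic inversion of the resulting logarithmic integral equation on the small circle---closer to the layer-potential strategy advertised in the introduction---provides an alternative route and would be needed to treat traps more general than geodesic balls.
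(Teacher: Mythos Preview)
Your argument is correct, and it is genuinely different from---and considerably shorter than---the paper's proof. The paper proceeds via a layer-potential representation: starting from Green's identity it obtains the integral relation \eqref{u_partialu_eq} linking $u_\varepsilon$, its average, and the unknown boundary flux $\partial_\nu u_\varepsilon|_{\partial\Gamma_\varepsilon}$; Proposition~\ref{partial_u} then pins down this flux as $-|M_\varepsilon|/(2\pi\varepsilon)+O(1)$, which in turn requires Lemmas~\ref{I}--\ref{sing str of E} on the fine structure of $I_\varepsilon$, $\partial_{\nu_x}I_\varepsilon$, and the boundary operator with kernel $\partial_{\nu_x}E$. Only after this machinery is in place does the paper evaluate \eqref{u_partialu_eq} first on $\partial\Gamma_\varepsilon$ (to extract the average) and then on interior compacts (to get the pointwise expansion). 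By contrast, you observe that $v_\varepsilon:=u_\varepsilon+|M|E(\cdot,x_0)$ is harmonic on $M_\varepsilon$ with boundary data equal to $-\frac{|M|}{2\pi}\log\varepsilon+|M|P_{-4}(\cdot,x_0)$, and a single application of the maximum principle against the constant $C_\varepsilon$ delivers $\|v_\varepsilon-C_\varepsilon\|_{C(\overline{M_\varepsilon})}=O(\varepsilon)$. This actually yields a remainder uniform on all of $\overline{M_\varepsilon}$, slightly sharper than the paper's $r_\varepsilon+O(\varepsilon\log\varepsilon)$ with $r_\varepsilon$ controlled only on compacts away from the trap.

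What the paper's route buys is information you never compute: the asymptotic of $\partial_\nu u_\varepsilon$ on $\partial\Gamma_\varepsilon$ and the boundary-operator estimate of Lemma~\ref{sing str of E} are exactly the ingredients one would need for non-geodesic traps, for multiple traps, or for higher-order corrections, as you yourself note. Your maximum-principle shortcut exploits the special feature that for a single geodesic ball the comparison function $|M|E(\cdot,x_0)$ already matches both the PDE and the boundary data to order $\varepsilon$; for a trap that is not a level set of $d_g(\cdot,x_0)$ this would fail and one is pushed back toward the integral-equation approach. Both proofs rest on the same microlocal input, namely the $C^1$ extension of $P_{-4}$ in \eqref{dec}.
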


In Section $5$ Theorem \ref{main2} we will prove a similar result for $\partial M\neq \emptyset$ with reflection boundary conditions for the Brownian motion. In this setting, we will use instead the Neumann Green's function $E(x,y)$. See Section \ref{proof of thm with bry} for details.

This paper is structured in the following manner. In Section \ref{Prelim}, we introduce some notation and the geometric framework with which we will be operating. Section \ref{Green function} deals with investigating the singular structure of the Green's function on a Riemannian surface without boundary. In Section \ref{proof of thm without bry}, We make use of the derived Greens function to prove Theorem \ref{main}. In Section \ref{proof of thm with bry}, we consider the analogous problem in the setting of a manifold with boundary and impose a reflecting boundary condition for our Brownian motion. The result will be stated in Theorem \ref{main2}.

\section{Preliminaries}\label{Prelim}
Throughout this paper, $(M,g)$ be a compact connected orientable Riemannian surface with smooth boundary, $\partial M$ which could be empty.The corresponding volume form and geodesic distance are denoted by $dvol_g$ and $d_g(\cdot,\cdot)$, respectively.  By $|M|$ we denote the volume of $M$.

For fixed $x\in M$, we will denote by $B_\rho(x)$ the geodesic ball of radius $\rho>0$ centred at $x$. In what follows $\rho$ will always be smaller than the injectivity radius of $( M, g)$ and the distance from $x$ to $\partial M$. We let $\D_\rho$ be the Euclidean ball in $\R^2$ of radius $\rho$ centred at the origin. 

In this work, we will often use the geodesic coordinates constructed as follows. For fixed $x_0\in M$ and orthonormal tangent vectors $E_1, E_2 \in T_{x_0}M$, write $t=(t_1,t_2) \in \mathbb{D}_\rho$ and define 
\begin{equation}\label{geod coord}
x(t;x_0):= \exp_{x_0}(t_1E_1 + t_2E_2)
\end{equation}
where $\exp_{x_0} (V)$ denotes the time $1$ map of $g$-geodesics with initial point $x_0$ and initial velocity $V\in T_{x_0} M$. The coordinate $t\in \D_\rho \mapsto x(t; x_0)$ is then an $g$-geodesic coordinate system for a neighbourhood of $x_0$ on $M$.

We will also use the re-scaled version of this coordinate system. For $\varepsilon >0$ sufficiently small we define the (re-scaled) $g$-geodesic coordinate by the following map
\begin{equation}\label{res coord}
x^{\varepsilon}(\cdot ; x_0) : t = (t_1, t_2) \in \D \mapsto x(\varepsilon t; x_0) \in B_\varepsilon(x_0),
\end{equation}
where $ \mathbb{D}$ is the unit disk in $\R^2$.

In the subsequent sections, we denote the centre of the "trap" by $x_0\in M$, which will be considered as fixed. We will use the following notations. We set $\Gamma_\varepsilon := B_{\varepsilon}(x_0)$
\begin{equation*}
M_{\varepsilon}= M_\varepsilon(x_0) := M \setminus \Gamma_\varepsilon
\end{equation*}
and denote by $h=h(\varepsilon,x_0)$ the metric on $\partial M_\varepsilon$, induced by the trivial embedding of $\partial M_\varepsilon$ into $M_\varepsilon$. The corresponding volume form is denoted by $dvol_h$. Further, we set $z\in \partial M_\varepsilon \mapsto \nu_z$ to be an outward pointing normal for $M_\varepsilon$. Finally, we let $|M_\varepsilon|$, $|\partial \Gamma_\varepsilon|$ be the volumes of $M_\varepsilon$ and $\partial \Gamma_\varepsilon$ with respect to $g$ and $h$.

\section{Green's function}\label{Green function}
 Within this section we assume that $\partial M$ is empty and we consider the Green function on $M$, which is the fundamental solution to the Laplace equation:
\begin{equation}
\label{no boundary green}
\Delta_g E(x,y) = - \delta_y(x) + \frac{1}{|M|}, \quad E(x,y) = E(y,x), \quad \int_M E(x,y) dvol_g(y) = 0.
\end{equation}

For a fixed $x_0\in M$ and set $\Gamma_\varepsilon = B_{\varepsilon}(x_0)$ we consider the following function
\begin{equation*}
	I_{\varepsilon}(x_0,x) := \int_{\Gamma_\varepsilon} E(x,y) dvol_g(y).
\end{equation*}
for $x\in M_\epsilon$. We will need to know about the singular behaviour of $\partial_{\nu_x} E(\cdot,\cdot)$, $I_\varepsilon(x_0,\cdot)$, and $\partial_{\nu_x}I_{\varepsilon}(x_0,\cdot)$ on $\partial \Gamma_\varepsilon$ as we approach neighbourhoods of the diagonal. To investigate these, we recall the singularity structure of $E(\cdot,\cdot)$:

\begin{proposition}\label{sing structure E}
	The Green function $E(x,y)$ and has the following singularity structure near the diagonal
	\begin{equation*}
	E(x,y) = -\frac{1}{2\pi} \log d_g(x,y) + P_{-4}(x,y),
	\end{equation*}
	where $P_{-4}(x,y)\in C^1(M\times M)$ is infinitely differentiable off the diagonal $\{x=y\}$.
\end{proposition}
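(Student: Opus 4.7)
The plan is to realize $E(x,y)$ as the Schwartz kernel of a classical pseudodifferential operator of order $-2$ on $M\times M$ and read off the diagonal singularity from its symbol expansion. Since $\Delta_g$ on the closed surface has one-dimensional kernel consisting of the constants, one defines $T := \Delta_g^{-1}$ on the $L^2$-orthogonal complement of constants, extended by zero on the kernel; by standard elliptic theory on closed manifolds, $T$ is a classical pseudodifferential operator of order $-2$. After accounting for the normalization $\int_M E(x,y)\,dvol_g(y) = 0$ in \eqref{no boundary green} (which contributes only a smooth rank-one kernel correction), the Schwartz kernel of $T$ coincides with $E(x,y)$.

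The structural input I would exploit is that $\Delta_g = -d^*d$ has vanishing subprincipal symbol, since it is a formally self-adjoint second-order operator with no zeroth-order term. By the composition calculus, this transfers to its parametrix: the symbol of $T$ admits an asymptotic expansion $|\xi|_g^{-2} + a_{-4}(x,\xi) + \cdots$ with no homogeneous component of order $-3$.

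To extract the logarithmic piece, I would work in Riemannian normal coordinates $v \mapsto x = \exp_y(v)$ near the diagonal, in which $g_{ij}(v) = \delta_{ij}+O(|v|^2)$ and $d_g(x,y) = |v|$. The Schwartz kernel of the principal part of $T$ is obtained by inverse Fourier transforming $|\xi|_g^{-2}$; since $-\frac{1}{2\pi}\log|v|$ is the fundamental solution of the Euclidean Laplacian on $\R^2$, this contribution equals $-\frac{1}{2\pi}\log d_g(x,y)$ modulo a smooth error. Consequently $P_{-4}(x,y) := E(x,y)+\frac{1}{2\pi}\log d_g(x,y)$ is the Schwartz kernel of a pseudodifferential operator of order $-4$ on the two-dimensional manifold. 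The classical regularity estimate for such kernels, namely that an operator of order $-m$ on an $n$-manifold has $C^k$ Schwartz kernel whenever $m - n > k$, gives $P_{-4}\in C^1(M\times M)$, while $C^\infty$-smoothness off the diagonal is automatic from pseudolocality.

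The main technical obstacle will be identifying the invariant geometric singularity $-\frac{1}{2\pi}\log d_g(x,y)$ with the coordinate-dependent singularity $-\frac{1}{2\pi}\log|v|$ produced by the local symbolic computation; one must track how the curvature corrections $g_{ij}(v)=\delta_{ij}+O(|v|^2)$ propagate through the Fourier inversion and are absorbed into the $C^1$ remainder rather than showing up at the logarithmic level. A secondary issue is rigorously verifying the vanishing of the $-3$ order symbol of $T$ within the chosen pseudodifferential calculus, which ultimately rests on the absence of odd-order symbolic contributions to $\Delta_g$.
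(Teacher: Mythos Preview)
Your proposal is correct and follows essentially the same route as the paper's proof in the appendix: both identify $E$ with the Schwartz kernel of an order $-2$ classical $\Psi$DO, extract the leading $-\frac{1}{2\pi}\log d_g(x,y)$ term in Riemann normal coordinates, and argue that the order $-3$ contribution vanishes so that the remainder lies in $\Psi^{-4}_{cl}(M)$ and is hence $C^1$. The only cosmetic difference is that you phrase the vanishing of the order $-3$ symbol via the vanishing subprincipal symbol of $\Delta_g$, whereas the paper verifies it by an explicit symbol-product computation using $D_t^\mu b(t,\xi)|_{t=0}=0$ in normal coordinates; these are two packagings of the same fact.
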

We will prove proposition 3.1 in Section \ref{proof of lemma 3.1} as it involves the use of pseudodifferential operators. 

As the distance function plays a crucial role in the Green's function $E(x,y)$, it is useful to derive asymptotics for them in the appropriate coordinate systems:

\begin{lemma}
Let
$$d_g^*(s,t): = d_g(x(s,x_0),x(t,x_0)).$$
where $t  = (t_1, t_2) \in \D_\rho \mapsto x(t,x_0)$ is the coordinate system defined in \eqref{geod coord}. Then we have that
\begin{equation*}
	d_g^*(s,t) = |s - t| + |s - t| F\left(t,\frac{s - t}{|s - t|}, |s - t|\right)
	\end{equation*}
for some smooth function $F(t,\omega, r)\in C^\infty(\D_\rho \times S^1\times[0,2\rho])$  which is $O(t) + O(r)$.
\end{lemma}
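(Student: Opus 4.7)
The plan is to study the squared distance $\phi(s,t) := d_g^*(s,t)^2$, which is smooth on $\mathbb{D}_\rho\times\mathbb{D}_\rho$ once $\rho$ is taken small enough that $2\rho$ lies below the injectivity radius of $(M,g)$, so that $x(s,x_0)$ and $x(t,x_0)$ are always joined by a unique minimising geodesic. Since $\phi(s,s) = 0$ and $s\mapsto \phi(s,t)$ attains its minimum at $s=t$, the substitution $u=s-t$ combined with two applications of Hadamard's lemma produces smooth functions $a_{ij}(u,t)$ on a neighbourhood of $\{0\}\times\mathbb{D}_\rho$ with
\[
\phi(t+u,t) = \sum_{i,j=1}^{2} a_{ij}(u,t)\,u_i u_j, \qquad a_{ij}(0,t) = \tfrac{1}{2}\partial_{u_i}\partial_{u_j}\phi(t+u,t)\big|_{u=0}.
\]

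The crucial identity is $a_{ij}(0,t) = g^*_{ij}(t)$, where $g^*_{ij}$ denotes the coordinate representation of $g$ in the normal chart $x(\cdot;x_0)$. To verify it, I would compose with normal coordinates $\psi$ based at $x(t,x_0)$, write $\phi(s,t) = |\psi^{-1}(x(s,x_0))|^2$ for $s$ near $t$, and compute the Hessian at $s=t$: because $\psi^{-1}(x(t,x_0)) = 0$, the second derivatives collapse to $D\chi(t)^{\mathsf T}D\chi(t)$ for $\chi := \psi^{-1}\circ x(\cdot;x_0)$, and this equals $g^*(t)$ since $(\psi^*g)(0)$ is the identity matrix.

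Passing to polar coordinates $u = r\omega$ with $r = |s-t|$ and $\omega\in S^1$ then yields
\[
\phi(s,t) = r^2 G(t,\omega,r), \qquad G(t,\omega,r) := \sum_{i,j} a_{ij}(r\omega,t)\,\omega_i \omega_j,
\]
which is smooth on $\mathbb{D}_\rho\times S^1\times[0,2\rho]$. At $r=0$ the value $G(t,\omega,0) = g^*_{ij}(t)\omega_i\omega_j$ is bounded below by the smallest eigenvalue of $g^*(t)$, a positive continuous function on the compact set $\overline{\mathbb{D}_\rho}$, while for $r > 0$ one has $G = (d_g^*/r)^2 > 0$. Hence $G$ is uniformly positive on its compact domain, $\sqrt{G}$ is smooth, and $F := \sqrt{G}-1$ delivers the advertised decomposition $d_g^*(s,t) = |s-t|\bigl(1 + F(t,\omega,r)\bigr)$. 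The estimate $F = O(t) + O(r)$ follows because $g^*_{ij}(0) = \delta_{ij}$ forces $F(0,\omega,0) \equiv 0$, after which smoothness of $F$ supplies the bound. The only real technical subtlety is the Hessian identity of the second paragraph, since strict positivity of $G$ up to $r = 0$ is precisely what allows a smooth square root to be taken; everything else reduces to Hadamard's lemma, a polar change of variables, and compactness.
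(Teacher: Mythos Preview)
Your argument is correct and follows essentially the same route as the paper: represent $d_g^{*}(s,t)^2$ as a smooth quadratic form in $s-t$ whose coefficient matrix equals $g^*_{ij}(t)$ on the diagonal, then extract the square root after passing to polar variables. The only difference is cosmetic---the paper quotes the quadratic-form representation from an external lemma (Lemma~4.8 of \cite{lefeuvretensor}) and leaves the square-root step implicit, whereas you derive the representation via Hadamard's lemma and the normal-coordinate Hessian computation and spell out why $G>0$ permits a smooth square root.
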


\begin{proof}
By Lemma 4.8 of \cite{lefeuvretensor}, if $t\mapsto x(t,x_0)$ is any coordinate system, there exists a matrix $H_{j,k}(s,t)$ smooth in $(s,t)$ such that
\begin{eqnarray}
\label{dstar}
d_{g}^*(s,t)^2 = \sum_{j=1}^2H_{j,k}(s,t)(s_j - t_j) (s_k - t_k),
\end{eqnarray}
where $H_{j,k}(t,t)  = g_{j,k}(t)$ is the coordinate expression for the metric tensor $g$. Since the coordinate system \eqref{geod coord} is the geodesic coordinate system, we have that  $g_{j,k}(t) =\delta_{j,k} + O(|t|^2)$. So we get $H_{j,k}(t, t) = \delta_{j,k} + O(|t|^2)$. Taylor expand $H_{j,k}(s,t)$ around $s=t$ and insert the resulting expression into \eqref{dstar} we get
$$d_{g}^*(s,t) = |s-t| + |s-t| F\left(t, \frac{s-t}{ |s-t|},  |s-t|\right)$$
for some smooth function $F\in C^\infty(\D_\rho \times S^1\times[0,r_0])$ which is $O(t) + O(r)$.
\end{proof}

The following distance expression in the rescaled normal coordinates given by \eqref{res coord} was stated in Corollary 2.6 of \cite{NTT2021}:
\begin{lemma}
\label{d inverse} 
For the coordinates given by \eqref{res coord},
\begin{equation*}
		d^{-1}_g(x^\varepsilon(s,x_0), x^\varepsilon(t,x_0)) = \varepsilon^{-1}|t - s|^{-1} + \varepsilon |t - s|^{-1}A(\varepsilon, s,r,\omega)
	\end{equation*}
for some smooth function $A$ in the variables $(\varepsilon,s,r,\omega)\in [0,\varepsilon_0]\times\mathbb{D}\times\mathbb{R}\times S^1$, where $r=|t-s|$ and $\omega = \frac{t-s}{|t-s|}$.
\end{lemma}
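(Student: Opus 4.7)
The plan is to rescale the identity
\[
d_g^{*2}(s,t) = \sum_{j,k} H_{j,k}(s,t)(s_j - t_j)(s_k - t_k)
\]
that appears in the proof of the preceding lemma, and then read off the desired two-term expansion in $\varepsilon$. I would first introduce
\[
h(\varepsilon, s, t, \omega) := \sum_{j,k} H_{j,k}(\varepsilon s, \varepsilon t)\,\omega_j \omega_k,
\]
which is smooth in all its arguments, so that $d_g^{*2}(\varepsilon s, \varepsilon t) = \varepsilon^2 r^2\, h(\varepsilon, s, t, \omega)$. Since $H_{j,k}(0,0) = g_{j,k}(0) = \delta_{j,k}$, we get $h(0,\cdot) \equiv 1$.

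The key step is to prove the sharper statement $h(\varepsilon, s, t, \omega) = 1 + \varepsilon^2 h_2(\varepsilon, s, t, \omega)$ for some smooth $h_2$. This reduces to showing $\partial_{s_l} H_{j,k}(0,0) = \partial_{t_l} H_{j,k}(0,0) = 0$ for all $j, k, l$. For this I would combine two observations. First, the symmetry $d_g^{*2}(s,t) = d_g^{*2}(t,s)$ allows one to replace $H$ by its symmetrization in $(s,t)$ without changing the identity, so we may assume $\partial_{s_l} H_{j,k}(0,0) = \partial_{t_l} H_{j,k}(0,0)$. Second, since $H_{j,k}(t,t) = g_{j,k}(t)$, the diagonal derivative equals $(\partial_{s_l} + \partial_{t_l}) H_{j,k}(0,0) = \partial_l g_{j,k}(0)$, which vanishes in geodesic normal coordinates (the Christoffel symbols are zero at the origin). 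The two facts together force the individual partials to vanish, and Hadamard's lemma applied in the $\varepsilon$ variable then delivers the smooth $h_2$.

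From here everything is routine. For $\varepsilon$ small, $h$ stays close to $1$, so square roots and reciprocals are smooth and
\[
d_g^{-1}\bigl(x^\varepsilon(s,x_0), x^\varepsilon(t,x_0)\bigr) = \varepsilon^{-1} r^{-1} h^{-1/2} = \varepsilon^{-1} r^{-1} + \varepsilon r^{-1}\, B(\varepsilon, s, t, \omega),
\]
where $B := \varepsilon^{-2}(h^{-1/2} - 1)$ is smooth because $h^{-1/2} - 1$ vanishes to second order at $\varepsilon = 0$. Finally, substituting the smooth parametrization $t = s + r\omega$ yields $A(\varepsilon, s, r, \omega) := B(\varepsilon, s, s+r\omega, \omega)$, smooth in the required variables.

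The principal obstacle is the symmetry-plus-normal-coordinate argument that upgrades the remainder from $h - 1 = O(\varepsilon)$ to $h - 1 = O(\varepsilon^2)$. A naive estimate only produces the linear gain, which would make $A$ blow up like $\varepsilon^{-1}$ as $\varepsilon \to 0$ and defeat the stated conclusion; once the quadratic gain is secured, the remainder of the proof is bookkeeping about composition of smooth functions.
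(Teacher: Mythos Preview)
Your argument is correct. The paper does not actually prove this lemma; it simply records the statement and cites Corollary~2.6 of \cite{NTT2021}. Your route---rescale the Hadamard identity $d_g^{*2}(s,t)=\sum H_{j,k}(s,t)(s_j-t_j)(s_k-t_k)$, symmetrise $H$ in $(s,t)$, and combine $H_{j,k}(t,t)=g_{j,k}(t)$ with $\partial_l g_{j,k}(0)=0$ in geodesic normal coordinates to force $\nabla H(0,0)=0$---is exactly the mechanism that upgrades the na\"ive $h-1=O(\varepsilon)$ to $h-1=O(\varepsilon^2)$, which is the only nontrivial point. The remaining steps (Hadamard's lemma in $\varepsilon$, smoothness of $h^{-1/2}$, and the change of variables $t=s+r\omega$) are routine, as you say. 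One cosmetic remark: to have $A$ defined and smooth for $r$ ranging over an interval containing $0$ (rather than just $r=|t-s|\in[0,2]$ with $t\in\mathbb{D}$), you may need to first extend $H$ smoothly off $\mathbb{D}_\rho\times\mathbb{D}_\rho$; this is harmless but worth a sentence.
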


In the next two lemmas, we investigate the properties of $I_{\varepsilon}(x_0,\cdot)$
\begin{lemma}\label{I}
	The following estimate holds
	\begin{equation}\label{sing of I}
		\sup_{x\in \partial \Gamma_\varepsilon} I_{\varepsilon}(x_0, x) = O(\varepsilon^2 \log\varepsilon), \qquad
		\text{as } \varepsilon \rightarrow 0.
	\end{equation}
\end{lemma}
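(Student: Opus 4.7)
The plan is to insert the singularity decomposition of Proposition \ref{sing structure E} and separate $I_\varepsilon(x_0,x)$ into the logarithmic contribution and the regular $P_{-4}$ contribution, then handle each by passing to rescaled geodesic normal coordinates around $x_0$.

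First I would write
\[
I_\varepsilon(x_0, x) = -\frac{1}{2\pi}\int_{\Gamma_\varepsilon} \log d_g(x,y)\, dvol_g(y) + \int_{\Gamma_\varepsilon} P_{-4}(x,y)\, dvol_g(y).
\]
Since $P_{-4}\in C^1(M\times M)$ is bounded and $|\Gamma_\varepsilon| = O(\varepsilon^2)$, the second integral is $O(\varepsilon^2)$, uniformly in $x$.

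For the logarithmic piece, I would parametrize the trap by the rescaled normal coordinates $y = x^\varepsilon(t,x_0)$, $t\in\mathbb{D}$, as in \eqref{res coord}, and write the boundary point as $x = x^\varepsilon(s,x_0)$ with $s\in\partial\mathbb{D}$. The volume form pulls back as $dvol_g(y) = \varepsilon^2\sqrt{\det g(\varepsilon t)}\,dt = \varepsilon^2(1+O(\varepsilon^2))dt$ in geodesic normal coordinates. Applying the distance asymptotic of the preceding lemma (rescaled by $\varepsilon$), one has
\[
d_g(x^\varepsilon(s,x_0),x^\varepsilon(t,x_0)) = \varepsilon |s-t|\bigl(1 + F(\varepsilon t, \tfrac{s-t}{|s-t|}, \varepsilon|s-t|)\bigr),
\]
where $F = O(\varepsilon)$ uniformly for $s,t\in\overline{\mathbb{D}}$. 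Taking logarithms,
\[
\log d_g(x,y) = \log\varepsilon + \log |s-t| + \log(1 + O(\varepsilon)) = \log\varepsilon + \log|s-t| + O(\varepsilon).
\]
Therefore
\[
\int_{\Gamma_\varepsilon}\log d_g(x,y)\,dvol_g(y) = \varepsilon^2\log\varepsilon\,|\mathbb{D}| + \varepsilon^2\int_{\mathbb{D}}\log|s-t|\,dt + O(\varepsilon^3).
\]
The integral $\int_\mathbb{D} \log|s-t|\,dt$ is continuous on the compact set $s\in\partial\mathbb{D}$, hence uniformly bounded. Combining the contributions yields the claimed bound
\[
\sup_{x\in\partial\Gamma_\varepsilon} I_\varepsilon(x_0,x) = O(\varepsilon^2\log\varepsilon).
\]

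The computation is essentially routine; the only point requiring attention is uniformity of the correction terms in both $s\in\partial\mathbb{D}$ and $t\in\mathbb{D}$, which is exactly what the smooth-remainder form of the distance asymptotic in the previous lemma provides. I expect no substantial obstacle beyond careful bookkeeping of which error terms are absorbed into $O(\varepsilon^2\log\varepsilon)$ and which are strictly smaller.
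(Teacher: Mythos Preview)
Your argument is correct. Both you and the paper begin by invoking Proposition~\ref{sing structure E} to dispose of the $P_{-4}$ contribution as $O(\varepsilon^2)$ and reduce to the logarithmic integral, but the executions diverge from there. The paper proceeds by a crude domination: for $\varepsilon$ small the integrand $\log d_g(x,y)$ is negative, so one may enlarge the domain from $B_\varepsilon(x_0)$ to $B_{2\varepsilon}(x)$, then use only the two-sided comparison $\tfrac{1}{2}|s-t|\le d_g^*(s,t)\le 2|s-t|$ from Lemma~3.2 to bound everything by a Euclidean integral $\int_{|s-t|\le 4\varepsilon}\log(2|s-t|)\,ds$. You instead pass to the rescaled chart $x^\varepsilon$, use the full precision of the distance lemma to write $\log d_g = \log\varepsilon + \log|s-t| + O(\varepsilon)$, and integrate term by term. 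Your route actually extracts the leading coefficient ($\pi\varepsilon^2\log\varepsilon$ plus an explicit $O(\varepsilon^2)$ term) rather than just an upper bound, at the modest cost of tracking the $O(\varepsilon)$ remainder in the logarithm uniformly in $s,t$; the paper's route is coarser but requires no such bookkeeping.
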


\begin{proof}
	Due to Proposition \ref{sing structure E} it is sufficient to prove that
	\begin{equation*}
		\sup_{x\in \partial \Gamma_\varepsilon} \int_{\Gamma_\varepsilon} \log d_g(x,y) dvol_g(y) = O(\varepsilon^2 \log\varepsilon).
	\end{equation*}
	We consider $\varepsilon>0$ sufficiently small, so that $\log (10\varepsilon) < 0$. Then, for $x\in \partial \Gamma_\varepsilon$, 
	\begin{equation*}
	\left|\int_{\Gamma_\varepsilon} \log d_g(x,y) dvol_g(y)\right| =
	\left|\int_{B_{\varepsilon}(x_0)} \log d_g(x,y) dvol_g(y)\right| \leq \left|\int_{B_{2\varepsilon}(x)} \log d_g(x,y) dvol_g(y)\right|.
	\end{equation*}
	For $\varepsilon>0$ sufficiently small we can find $\rho>3\varepsilon$ which is smaller than the injectivity radius. We will use the coordinate system given by 
	\begin{equation*}
		\mathbb{D}_{\rho} \ni (s_1,s_2) \mapsto x(s_1,s_2;x_0),
	\end{equation*}
	defined in Section \ref{Prelim}. We recall that $s=(s_1,s_2)$ and $t=(t_1,t_2)$ and let
$$d_g^*(s,t): = d_g(x(s,x_0),x(t,x_0)).$$
Lemma 3.2 tells us that 
	\begin{equation*}
	d_g^*(s,t) = |s - t| + |s - t| F\left(t,\frac{s - t}{|s - t|}, |s - t|\right)
	\end{equation*}
	for some smooth function $F$ which is $O(|t|) + O(|s - t|)$. Therefore, for sufficiently small $\varepsilon >0$, we can choose $\rho>0$ small enough so that for all $s,t \in \D_\rho$, 
	\begin{equation*}
	\frac{1}{2}|s - t| \leq d_g^*(t,s) \leq 2|s - t|.
	\end{equation*}
	Furthermore, we choose $C > 0$ such that $\sqrt{\textrm{det}( g_{j,k}(s))} \leq C$ for $s\in \mathbb{D}_{\rho}$. Therefore, for $x = x(t,x_0)\in \partial \Gamma_\varepsilon$, we estimate
	\begin{align*}
		\left|\int_{B_{2\varepsilon}(x)} \log d_g(x,y) dvol_g(y)\right| 
		&\leq \left| C\int_{d_g^*(t,s)\leq 2\varepsilon} \log\left(2|s - t|\right) ds \right|\\
		&\leq \left| C\int_{|s - t|\leq 4\varepsilon} \log\left(2|s - t|\right) ds \right| = O(\varepsilon^2 \log\varepsilon).
	\end{align*}
\end{proof}

\begin{lemma}\label{partial I}
	The following estimate holds
	\begin{equation}\label{sing of partialI}
	\sup_{x\in \partial \Gamma_\varepsilon} \partial_{\nu_x} I(x_0,x) = O(\varepsilon),
	\qquad \text{as } \varepsilon \rightarrow 0.
	\end{equation}
\end{lemma}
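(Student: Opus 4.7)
The plan is to mirror the proof of Lemma \ref{I}, but with one extra normal derivative in $x$. I would begin by differentiating under the integral and invoking Proposition \ref{sing structure E} to split
\begin{equation*}
\partial_{\nu_x} I_\varepsilon(x_0, x) = -\frac{1}{2\pi}\int_{\Gamma_\varepsilon} \partial_{\nu_x}\log d_g(x,y)\, dvol_g(y) + \int_{\Gamma_\varepsilon} \partial_{\nu_x} P_{-4}(x,y)\, dvol_g(y).
\end{equation*}
Since $P_{-4}\in C^1(M\times M)$, the function $\partial_{\nu_x} P_{-4}(x,y)$ is uniformly bounded on a neighbourhood of $(x_0,x_0)$, so the second integral is $O(|\Gamma_\varepsilon|) = O(\varepsilon^2)$, uniformly in $x\in \partial\Gamma_\varepsilon$.

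For the singular term I would use the eikonal identity $|\nabla_x d_g(x,y)|_g = 1$ (valid wherever $d_g$ is smooth in $x$, which is the case since $\varepsilon$ is well below the injectivity radius at $x_0$) to obtain the pointwise bound $|\partial_{\nu_x}\log d_g(x,y)| \leq d_g(x,y)^{-1}$. Then, exactly as in the proof of Lemma \ref{I}, for $x\in \partial\Gamma_\varepsilon$ we have $\Gamma_\varepsilon\subset B_{2\varepsilon}(x)$, and passing to the geodesic coordinates $\mathbb{D}_\rho \ni s \mapsto x(s;x_0)$, the comparability $\tfrac{1}{2}|s-t| \leq d_g^*(s,t) \leq 2|s-t|$ from Lemma 3.2 gives
\begin{equation*}
\left| \int_{\Gamma_\varepsilon} \partial_{\nu_x}\log d_g(x,y)\, dvol_g(y)\right|
\leq C\int_{|s-t|\leq 4\varepsilon} \frac{ds}{|s-t|} = O(\varepsilon),
\end{equation*}
where $t$ is the coordinate of $x$. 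Combining the two bounds yields the required $O(\varepsilon)$ estimate.

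The main obstacle will be justifying the interchange of $\partial_{\nu_x}$ with the integral, because the differentiated kernel $\partial_{\nu_x}E(x,y)$ has a non-integrable-looking $d_g^{-1}$ singularity when $y\to x$. I would handle this by a standard regularization: split the domain as $\Gamma_\varepsilon = (\Gamma_\varepsilon \cap B_\delta(x)) \cup (\Gamma_\varepsilon \setminus B_\delta(x))$, differentiate under the integral on the smooth piece, and use the fact that in dimension two the singularity $d_g^{-1}$ is integrable (so that the normal derivative of $\int_{\Gamma_\varepsilon \cap B_\delta(x)} E(x,y)\, dvol_g(y)$ vanishes uniformly as $\delta\to 0$) before sending $\delta\to 0$. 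Alternatively, one can pass to the rescaled coordinates of Lemma \ref{d inverse} and compute the derivative directly in the $t$-variable; the chain-rule factor $\varepsilon^{-1}$ arising from the rescaling is then exactly compensated by the $\varepsilon^2$ from $dvol_g$ and the explicit $\varepsilon^{-1}|t-s|^{-1}$ behaviour of $d_g^{-1}$, which again leads to the $O(\varepsilon)$ bound.
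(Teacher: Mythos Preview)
Your argument is correct, and it is genuinely simpler than the route taken in the paper. The paper does not use the decomposition $E=-\tfrac{1}{2\pi}\log d_g+P_{-4}$ directly for this lemma; instead it invokes a more detailed local expansion of $E$ (with additional terms $q_2$, $p_2\log|s-t|$ and a $C^2$ remainder $R$), writes $\partial_{\nu_x}$ as $-\partial_r$ in geodesic polar coordinates, expresses the leading $\partial_r\log(\cdot)$ explicitly in matrix form, and then rescales $r'\mapsto\varepsilon r'$ to see that the resulting integrals over $\mathbb{D}$ are bounded. That explicit computation is the same machinery the paper reuses in Lemma~\ref{sing str of E}, where the precise coefficient $\tfrac{1}{4\pi\varepsilon}$ must be extracted; here, where only an $O(\varepsilon)$ bound is needed, your eikonal estimate $|\partial_{\nu_x}\log d_g(x,y)|\le d_g(x,y)^{-1}$ together with the $C^1$ regularity of $P_{-4}$ does the job in a few lines.

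One small comment on the justification of differentiating under the integral: your $\delta$-splitting is a bit awkward because the excised ball $B_\delta(x)$ moves with $x$. A cleaner way to phrase it is that $I_\varepsilon(x_0,\cdot)$ is $C^{1,\alpha}$ (it solves $\Delta_g I_\varepsilon=-\chi_{\Gamma_\varepsilon}+|\Gamma_\varepsilon|/|M|\in L^\infty$), so one may compute $\partial_{\nu_x}I_\varepsilon$ for $x$ strictly outside $\overline{\Gamma_\varepsilon}$---where the integrand is smooth---and then pass to the limit $x\to\partial\Gamma_\varepsilon$ using the local integrability of $d_g^{-1}$ in dimension two. The paper, incidentally, also interchanges $\partial_r$ and $\int_{\mathbb{D}_\varepsilon}$ without comment, so you are not being held to a higher standard.
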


\begin{proof}
	Let us use the coordinate system $x(t,x_0)$. Note that in these coordinates the volume form for $M$ is given by
	\begin{equation}
		dvol_g(y) = (1 + \varepsilon V_{\varepsilon}(s))ds_1\wedge ds_2
	\end{equation}
	for some smooth function $V_\varepsilon (s)$ whose derivatives of all orders are bounded uniformly in $\varepsilon$. We also note that in these coordinates, we have
	\begin{equation}\label{dist}
	d_g(x(t;x_0), x(s;x_0))^2 = \sum_{\alpha,\beta=1}^{2}G_{\alpha,\beta}(s,t)(s_\alpha - t_\alpha)(s_\beta - t_\beta)
	\end{equation}
	where $t=(t_1,t_2)$, $s=(s_1,s_2)$, and $G_{\alpha,\beta}(s,t)$ is a smooth function on $\mathbb{D}\times\mathbb{D}$ such that $G_{\alpha,\beta}(s,s) = \delta_{\alpha}^{\beta} + O(|s|^2)$ for $s$ near $0$. Then, by Proposition 2.8 in \cite{taylor2}, we know
	\begin{align*}
	E(x(t;x_0), x(s;x_0)) = &-\frac{1}{4\pi} \log \left(\sum_{\alpha,\beta=1}^{2}G_{\alpha,\beta}(s,t)(s_\alpha - t_\alpha)(s_\beta - t_\beta)\right)\\
	&+ q_2(s,s-t) + p_2(s,s-t) \log |s - t|  +R(s,t).
	\end{align*}
	Here $p_2(x,z)$ is a polynomial homogeneous of degree $2$ in $z$, with the coefficients that are bounded, together with their $x$-derivatives. A function $q_2(x,z)$ is smooth on $\mathbb{R}^2\setminus\{0\}$ and homogeneous of degree $2$ in $z$. Finally, $R\in C^2(\mathbb{R}^2_s \times \mathbb{R}^2_t)$.
	
	Let us use the polar coordinates
	\begin{equation*}
		t = (r\cos\theta, r\sin \theta) \qquad s = (r'\cos\theta', r'\sin \theta').
	\end{equation*}
	We note that $x(\{|t|= \varepsilon;x_0\}) = \partial \Gamma_\varepsilon$ and $r \mapsto ( r\cos\theta, r \sin\theta)$ is the parametrization of unit speed geodesic issued from the origin. Therefore, since $\partial_{\nu_x}$ is the inward normal of $\partial \Gamma_\varepsilon$ for $x = ( r\cos\theta, r \sin\theta)\in \partial \Gamma_\varepsilon$, it follows from Gauss Lemma that $\Phi_* \partial_{\nu_x} = -\partial_r \in T_{(r\cos\theta, r \sin\theta)}\mathbb{R}^2$. Therefore
	\begin{align*}
	\partial_{\nu_x}E(x, y) = \partial_r \Bigg[-\frac{1}{4\pi} \log &\left(\sum_{\alpha,\beta=1}^{2}G_{\alpha,\beta}(s,t)(s_\alpha - t_\alpha)(s_\beta - t_\beta)\right)\\
	&+ q_2(s,s-t) + p_2(s,s-t)\log |s - t|  +R(s,t)\Bigg].
	\end{align*}
	Therefore
	\begin{align*}
	\partial_x I(x_0,x) = &\int_{\mathbb{D}_\varepsilon}\partial_r \Bigg[-\frac{1}{4\pi} \log \left(\sum_{\alpha,\beta=1}^{2}G_{\alpha,\beta}(s,t)(s_\alpha - t_\alpha)(s_\beta - t_\beta)\right)\\
	&+ q_2(s,s-t) + p_2(s,s-t)\log |s - t|  +R(s,t)\Bigg](1 + \varepsilon V_\varepsilon(s))ds.
	\end{align*}
	From the properties of functions $q_2$, $p_2$, and $R$, mentioned above, it follows that
	\begin{equation*}
	\int_{\mathbb{D}_\varepsilon} \partial_r \left( q_2(s,s-t) + p_2(s,s-t)\log |s - t|  +R(s,t) \right)(1 + \varepsilon V_\varepsilon(s))ds = O(\varepsilon^2)
	\end{equation*}
	uniformly on $t$. Hence, we have
	\begin{align}\label{I in eucl}
		\nonumber\partial_x I(x_0,x) = &-\frac{1}{4\pi} \int_{\mathbb{D}_\varepsilon}\partial_r  \log \left(\sum_{\alpha,\beta=1}^{2}G_{\alpha,\beta}(s,t)(s_\alpha - t_\alpha)(s_\beta - t_\beta)\right)ds\\
		& -\frac{\varepsilon}{4\pi} \int_{\mathbb{D}_\varepsilon}\partial_r  \log \left(\sum_{\alpha,\beta=1}^{2}G_{\alpha,\beta}(s,t)(s_\alpha - t_\alpha)(s_\beta - t_\beta)\right) V_\varepsilon(s)ds + O(\varepsilon^2)
	\end{align}
	The first integral of the right-hand side is equal to 
	\begin{align}
	\nonumber \int_{0}^{2\pi}&\int_{0}^{\varepsilon} \frac{2\begin{bmatrix}
		\cos\theta & \sin\theta
		\end{bmatrix}
		G
		\begin{bmatrix}
		r\cos\theta - r'\cos\theta'\\
		r\sin\theta - r'\sin\theta'
		\end{bmatrix}}{\begin{bmatrix}
		r\cos\theta - r'\cos\theta' & r\sin\theta - r'\sin\theta'
		\end{bmatrix}
		G
		\begin{bmatrix}
		r\cos\theta - r'\cos\theta'\\
		r\sin\theta - r'\sin\theta'
		\end{bmatrix}}r'dr'd\theta'\\
	&+ \int_{0}^{2\pi}\int_{0}^{\varepsilon} \frac{\begin{bmatrix}
		r\cos\theta - r'\cos\theta' & r\sin\theta - r'\sin\theta'
		\end{bmatrix}
		\partial_rG
		\begin{bmatrix}
		r\cos\theta - r'\cos\theta'\\
		r\sin\theta - r'\sin\theta'
		\end{bmatrix}}{\begin{bmatrix}
		r\cos\theta - r'\cos\theta' & r\sin\theta - r'\sin\theta'
		\end{bmatrix}
		G
		\begin{bmatrix}
		r\cos\theta - r'\cos\theta'\\
		r\sin\theta - r'\sin\theta'
		\end{bmatrix}}r'dr'd\theta',
	\end{align}
	where $G=G(r,\theta,r',\theta')$ is two by two matrix with entries $\{G_{\alpha,\beta}(s,t)\}$ with $t = t(r,\theta)$ and $s = s(r',\theta')$. Since $x\in \partial B_\varepsilon(x_0)$, we take $r=\varepsilon$. Then, if we change the variable $r' \mapsto \varepsilon r'$, the last expression becomes
	\begin{align*}
	\varepsilon\int_{0}^{2\pi}&\int_{0}^{1} \frac{2\begin{bmatrix}
		\cos\theta & \sin\theta
		\end{bmatrix}
		G
		\begin{bmatrix}
		\cos\theta - r'\cos\theta'\\
		\sin\theta - r'\sin\theta'
		\end{bmatrix}}{\begin{bmatrix}
		\cos\theta - r'\cos\theta' & \sin\theta - r'\sin\theta'
		\end{bmatrix}
		G
		\begin{bmatrix}
		\cos\theta - r'\cos\theta'\\
		\sin\theta - r'\sin\theta'
		\end{bmatrix}}r'dr'd\theta'\\
	&+ \varepsilon^2\int_{0}^{2\pi}\int_{0}^{1} \frac{\begin{bmatrix}
		\cos\theta - r'\cos\theta' & \sin\theta - r'\sin\theta'
		\end{bmatrix}
		\partial_rG
		\begin{bmatrix}
		\cos\theta - r'\cos\theta'\\
		\sin\theta - r'\sin\theta'
		\end{bmatrix}}{\begin{bmatrix}
		\cos\theta - r'\cos\theta' & \sin\theta - r'\sin\theta'
		\end{bmatrix}
		G
		\begin{bmatrix}
		\cos\theta - r'\cos\theta'\\
		\sin\theta - r'\sin\theta'
		\end{bmatrix}}r'dr'd\theta'.
	\end{align*}
	Note that we have integrable singularity at point $(r',\theta') = (1,\theta)$ and integrals are bounded uniformly on $\theta$. Therefore, the last expression is $O(\varepsilon)$ as $\varepsilon\rightarrow 0$ uniformly on $\theta$. Since $V_\varepsilon(s)$ is bounded uniformly on $\varepsilon$, the second term of \eqref{I in eucl} is $O(\varepsilon^3)$.
\end{proof}

Next, we obtain the singularity structure of $\partial_{\nu_x} E(\cdot,\cdot)$ in a neighbourhood of $x_0$:

\begin{lemma}\label{sing str of E}
	Let $B_{\varepsilon}(x_0)$ be the geodesic ball with radius $\varepsilon$ centred at $x_0$. Then
	\begin{equation*}
	\partial_{\nu_x} E(x,y)\left.\right|_{x,y\in \partial \Gamma_\varepsilon} = \frac{1}{4\pi\varepsilon} + Q_{\varepsilon}(x,y),
	\end{equation*}
	for some function $Q_\varepsilon$ such that 
	\begin{equation*}
	\sup_{x\in \partial \Gamma_\varepsilon} \int_{\partial \Gamma_\varepsilon}  Q_{\varepsilon}(x,y) dvol_h(y) = O(\varepsilon).
	\end{equation*}
\end{lemma}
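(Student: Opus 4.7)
The plan is to start from the decomposition in Proposition \ref{sing structure E}:
$$\partial_{\nu_x} E(x,y) = -\frac{1}{2\pi}\partial_{\nu_x}\log d_g(x,y) + \partial_{\nu_x} P_{-4}(x,y).$$
Since $P_{-4}\in C^1(M\times M)$ and $M\times M$ is compact, $\partial_{\nu_x}P_{-4}$ is uniformly bounded, so its contribution to $\int_{\partial\Gamma_\varepsilon}\!\cdot\,dvol_h$ is already $O(|\partial\Gamma_\varepsilon|)=O(\varepsilon)$ and may be absorbed into the remainder $Q_\varepsilon$. The task therefore reduces to extracting the constant $\tfrac{1}{4\pi\varepsilon}$ from the logarithmic term with a remainder that integrates to $O(\varepsilon)$.

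To do so, I would work in the geodesic normal coordinates at $x_0$ used in Lemma \ref{partial I}: write $x=x(t;x_0)$, $y=x(s;x_0)$ with $|t|=|s|=\varepsilon$ on $\partial\Gamma_\varepsilon$, so that $\partial_{\nu_x}=-\partial_r$ by Gauss's lemma. By Lemma 3.2, $d_g^*(s,t)^2=\sum_{j,k}H_{jk}(s,t)(s-t)_j(s-t)_k$ with $H_{jk}(t,t)=g_{jk}(t)=\delta_{jk}+O(|t|^2)$, and direct differentiation gives
$$\partial_r[d_g^*(s,t)^2] = -2\sum_{k,j}H_{kj}(s,t)(s-t)_j(t/|t|)_k + \sum_{i,j,k}(\partial_{t_k}H_{ij})(s-t)_i(s-t)_j(t/|t|)_k.$$
Replacing $H_{kj}$ by $\delta_{kj}$ in the first term and invoking the trigonometric identities $\langle s-t,\,t/|t|\rangle=-2\varepsilon\sin^2((\theta-\theta')/2)$ and $|s-t|^2=4\varepsilon^2\sin^2((\theta-\theta')/2)$ on the circle makes $\partial_r\log d_g^*(s,t)^2$ contribute exactly $1/\varepsilon$ at leading order, producing the advertised $\tfrac{1}{4\pi\varepsilon}$ via $-\tfrac{1}{2\pi}\partial_{\nu_x} = \tfrac{1}{2\pi}\cdot\tfrac{1}{2}\partial_r\log d_g^*(s,t)^2$.

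The main obstacle is controlling the remainder pointwise rather than only in average. Naively the $(H_{kj}-\delta_{kj})(s-t)_j$ contribution is of size $O(\varepsilon^2)/|s-t|=O(\varepsilon/|\sin((\theta-\theta')/2)|)$, which is log-divergent when integrated along the circle. The cancellation is specific to normal coordinates: expanding $g_{jk}(t)-\delta_{jk}=-\tfrac{K(x_0)}{3}(|t|^2\delta_{jk}-t_jt_k)+O(|t|^3)$ and contracting with $\omega=(s-t)/|s-t|$ for $|t|=|s|=\varepsilon$ produces, via the identity $\langle t,\omega\rangle=-\varepsilon|\sin((\theta-\theta')/2)|$, an extra factor of $|\sin((\theta-\theta')/2)|$ that cancels the offending $|s-t|^{-1}$ and leaves a pointwise $O(\varepsilon)$ remainder. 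The $\partial_{t_k}H_{ij}$ terms are manifestly $O(\varepsilon)$ after division by $d_g^*(s,t)^2$. Combining these estimates gives $Q_\varepsilon=O(1)$ uniformly on $\partial\Gamma_\varepsilon\times\partial\Gamma_\varepsilon$, and $\int_{\partial\Gamma_\varepsilon}|Q_\varepsilon|\,dvol_h \leq C|\partial\Gamma_\varepsilon|=O(\varepsilon)$ follows from $|\partial\Gamma_\varepsilon|=2\pi\varepsilon(1+O(\varepsilon^2))$.
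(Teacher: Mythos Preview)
Your overall strategy is sound and matches the paper's: pass to geodesic normal coordinates, differentiate the quadratic form $d_g^*(s,t)^2=\sum H_{jk}(s,t)(s-t)_j(s-t)_k$, extract $1/(4\pi\varepsilon)$ from the Euclidean piece, and bound the rest. The gap is in your cancellation step. First, you silently replace $H_{jk}(s,t)$ by $g_{jk}(t)=H_{jk}(t,t)$; the difference $H_{jk}(s,t)-g_{jk}(t)=O(|s-t|)$ is harmless (it contributes $O(|s-t|^2)/|s-t|^2=O(1)$), but it must be separated out. More seriously, your curvature expansion $g_{jk}(t)-\delta_{jk}=-\tfrac{K}{3}(|t|^2\delta_{jk}-t_jt_k)+O(|t|^3)$ only disposes of the quadratic part: the contraction $\sum_{jk}(|t|^2\delta_{jk}-t_jt_k)(s-t)_j(t_k/|t|)$ is in fact identically zero (not merely gaining a factor $|\sin|$), but then the $O(|t|^3)$ tail contributes $O(\varepsilon^3)|s-t|/|s-t|^2=O(\varepsilon^2/|\sin((\theta-\theta')/2)|)$, which is \emph{not} pointwise $O(\varepsilon)$ and whose $d\theta'$-integral diverges. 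So as written the remainder is not controlled.

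The clean fix is to use the \emph{exact} Gauss-lemma identity in normal coordinates, $\sum_k(g_{jk}(t)-\delta_{jk})t_k=0$, valid to all orders in $|t|$, in place of the truncated curvature expansion. With it, $\sum_{jk}(g_{jk}(t)-\delta_{jk})(s-t)_j(t_k/|t|)=0$ exactly, and only the $H_{jk}(s,t)-g_{jk}(t)=O(|s-t|)$ piece survives in the numerator correction, giving a uniform $O(1)$ bound on $Q_\varepsilon$; then $\int_{\partial\Gamma_\varepsilon}|Q_\varepsilon|\,dvol_h=O(|\partial\Gamma_\varepsilon|)=O(\varepsilon)$ follows as you wrote. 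This is precisely the mechanism the paper exploits, though organized differently: the paper uses the equivalent orthogonality $[\cos\theta,\sin\theta]\,g\,[-\sin\theta,\cos\theta]^T=0$ inside a Taylor expansion in $\theta'-\theta$, after splitting into near ($|\theta-\theta'|<\varepsilon$) and far regions, rather than seeking a single uniform pointwise bound on $Q_\varepsilon$.
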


\begin{proof}
	We begin as in the prove of Lemma \ref{partial I}. We repeat all steps until we derive
	\begin{align}\label{partial x E 2}
	\nonumber\partial_{\nu_x}E(x, y) = \partial_r \Bigg[-\frac{1}{4\pi} \log &\left(\sum_{\alpha,\beta=1}^{2}G_{\alpha,\beta}(s,t)(s_\alpha - t_\alpha)(s_\beta - t_\beta)\right)\\
	&+ q_2(s,s-t) + p_2(s,s-t) \log |s - t| +R(s,t)\Bigg].
	\end{align}
	We recall that $p_2(x,z)$ is a polynomial homogeneous of degree $2$ in $z$, with the coefficients that are bounded, together with their $x-$derivatives. A function $q_2(x,z)$ is smooth on $\mathbb{R}^2\setminus\{0\}$ and homogeneous of degree $2$ in $z$. Finally, $R\in C^2(\mathbb{R}^2_s \times \mathbb{R}^2_t)$. These conditions imply that
	\begin{equation*}
		 \int_0^{2\pi}\partial_r \left[q_2(s,s-t) + p_2(s,s-t) \log |s - t| +R(s,t)\right] \left.\right|_{r=r'=\varepsilon}d\theta' = O(1)
	\end{equation*}
	as $\varepsilon\rightarrow 0$ uniformly on $\theta$. 
	
	Next, we investigate the first term of the right-hand side of \eqref{partial x E 2}, which can be written as follows
	\begin{multline*}
	\frac{1}{4\pi} \frac{2\begin{bmatrix}
		\cos\theta & \sin\theta
		\end{bmatrix}
		G
		\begin{bmatrix}
		r\cos\theta - r'\cos\theta'\\
		r\sin\theta - r'\sin\theta'
		\end{bmatrix}}{\begin{bmatrix}
		r\cos\theta - r'\cos\theta' & r\sin\theta - r'\sin\theta'
		\end{bmatrix}
		G
		\begin{bmatrix}
		r\cos\theta - r'\cos\theta'\\
		r\sin\theta - r'\sin\theta'
		\end{bmatrix}}\\
	+\frac{1}{4\pi} \frac{\begin{bmatrix}
		r\cos\theta - r'\cos\theta' & r\sin\theta - r'\sin\theta'
		\end{bmatrix}
		\partial_rG
		\begin{bmatrix}
		r\cos\theta - r'\cos\theta'\\
		r\sin\theta - r'\sin\theta'
		\end{bmatrix}}{\begin{bmatrix}
		r\cos\theta - r'\cos\theta' & r\sin\theta - r'\sin\theta'
		\end{bmatrix}
		G
		\begin{bmatrix}
		r\cos\theta - r'\cos\theta'\\
		r\sin\theta - r'\sin\theta'
		\end{bmatrix}}.
	\end{multline*}
	Since $x$, $y\in \partial \Gamma_\varepsilon$, we take $r=r'=\varepsilon$, so that the last expression becomes
	\begin{multline*}
	\frac{1}{2\pi\varepsilon} \frac{\begin{bmatrix}
		\cos\theta & \sin\theta
		\end{bmatrix}
		G
		\begin{bmatrix}
		\cos\theta - \cos\theta'\\
		\sin\theta - \sin\theta'
		\end{bmatrix}}{\begin{bmatrix}
		\cos\theta - \cos\theta' & \sin\theta - \sin\theta'
		\end{bmatrix}
		G
		\begin{bmatrix}
		\cos\theta - \cos\theta'\\
		\sin\theta - \sin\theta'
		\end{bmatrix}}\\
	+\frac{1}{4\pi} \frac{\begin{bmatrix}
		\cos\theta - \cos\theta' & \sin\theta - \sin\theta'
		\end{bmatrix}
		\partial_rG
		\begin{bmatrix}
		\cos\theta - \cos\theta'\\
		\sin\theta - \sin\theta'
		\end{bmatrix}}{\begin{bmatrix}
		\cos\theta - \cos\theta' & \sin\theta - \sin\theta'
		\end{bmatrix}
		G
		\begin{bmatrix}
		\cos\theta - \cos\theta'\\
		\sin\theta - \sin\theta'
		\end{bmatrix}}.
	\end{multline*}
	Note that the last term belongs to $L^{\infty}(S^1_{\theta} \times S^1_{\theta'})$ uniformly in $\varepsilon$. Therefore, it remains to show that 
	\begin{equation}\label{it remains to show}
		\frac{1}{2\pi\varepsilon} \frac{\begin{bmatrix}
			\cos\theta & \sin\theta
			\end{bmatrix}
			G
			\begin{bmatrix}
			\cos\theta - \cos\theta'\\
			\sin\theta - \sin\theta'
			\end{bmatrix}}{\begin{bmatrix}
			\cos\theta - \cos\theta' & \sin\theta - \sin\theta'
			\end{bmatrix}
			G
			\begin{bmatrix}
			\cos\theta - \cos\theta'\\
			\sin\theta - \sin\theta'
			\end{bmatrix}}
		= \frac{1}{4\pi\varepsilon} + L_\varepsilon(\theta, \theta')
	\end{equation}
	for some function $L_{\varepsilon}$ such that 
	\begin{equation*}
		\int_{0}^{2\pi}L_\varepsilon(\theta, \theta')d\theta' = O(1), \qquad \text{as } \varepsilon\rightarrow 0,
	\end{equation*}
	uniformly on $\theta$. Let $J(\theta,\theta')$ be the left-hand side of \eqref{it remains to show}. We denote $J_{1} : = J \chi_{|\theta - \theta'|<\varepsilon}$ and $J_{2} : = J \chi_{|\theta - \theta'|>\varepsilon}$, where $\chi$ is an indicator function of the corresponding set.
	
	To investigate $J_1$, we will use Taylor expansion for its numerator and denominator at $\theta'=\theta$. We recall that $G_{j,k}(x,x) = g_{j,k}(x)$, for $r=r'=\varepsilon$, we get
	\begin{equation}\label{G}
	G = g +\varepsilon R_{\varepsilon}(\theta,\theta') (\theta - \theta'),
	\end{equation}
	where $R_\varepsilon$ is two by two matrix with $C^{\infty}(S^1_{\theta} \times S^1_{\theta'})$ entries and $g = \{g_{j,k}(\varepsilon\cos \theta, \varepsilon\sin \theta)\}_{k,j}^2$. Furthermore, we express $g$ in the following way
	\begin{equation}\label{g}
	g = I + \Gamma(\varepsilon,\theta),
	\end{equation}
	where $I$ is two by two identity matrix and $\Gamma$ is two by two matrix with interiors $O(\varepsilon^2)$. Therefore, by applying Taylor expansion at $\theta = \theta'$, we obtain
	\begin{multline*}
	\begin{bmatrix}
	\cos\theta & \sin\theta
	\end{bmatrix}
	G
	\begin{bmatrix}
	\cos\theta - \cos\theta'\\
	\sin\theta - \sin\theta'
	\end{bmatrix}
	=
	\begin{bmatrix}
	\cos\theta & \sin\theta
	\end{bmatrix}
	g
	\begin{bmatrix}
	- \sin\theta\\
	\cos\theta
	\end{bmatrix}\\
	+\begin{bmatrix}
	\cos\theta & \sin\theta
	\end{bmatrix}
	g
	\begin{bmatrix}
	K_1(\theta,\theta')\\
	K_2(\theta,\theta')
	\end{bmatrix}(\theta-\theta')^2+ O(\varepsilon) O( |\theta-\theta'|^2),
	\end{multline*}
	for some $K=(K_1,K_2) \in L^{\infty}\left(S^1\times S^1\right)^2$. Note that the normal vector on $\{|t| = \varepsilon\}$ is given by $\cos\theta\partial_{t_1} + \sin\theta\partial_{t_2}$ at the point $(\varepsilon\cos\theta, \varepsilon\sin\theta)$, while the tangent is given by $-\sin\theta\partial_{t_1} + \cos\theta\partial_{t_2}$. Therefore the first term of the right-hand side of the last equation is zero, so that
	\begin{align*}
	\begin{bmatrix}
	\cos\theta & \sin\theta
	\end{bmatrix}
	G
	\begin{bmatrix}
	\cos\theta - \cos\theta'\\
	\sin\theta - \sin\theta'
	\end{bmatrix}
	=\begin{bmatrix}
	\cos\theta & \sin\theta
	\end{bmatrix}
	g
	\begin{bmatrix}
	K_1\\
	K_2
	\end{bmatrix}(\theta-\theta')^2 + O(\varepsilon)O( |\theta-\theta'|^2).
	\end{align*}
	Similarly, by using \eqref{G} and \eqref{g}, we show that
	\begin{align*}
	\begin{bmatrix}
	\cos\theta - \cos\theta' & \sin\theta - \sin\theta'
	\end{bmatrix}
	G
	\begin{bmatrix}
	\cos\theta - \cos\theta'\\
	\sin\theta - \sin\theta'
	\end{bmatrix}
	=2(\theta - \theta')^2 + O(\varepsilon)O( |\theta-\theta'|^2).
	\end{align*}
	The last two estimates imply that
	\begin{equation*}
	\int_{0}^{2\pi} J_1d\theta' = \frac{1}{2\pi\varepsilon} \int_{0}^{2\pi} 
	\frac{\begin{bmatrix}
		\cos\theta & \sin\theta
		\end{bmatrix}
		G
		\begin{bmatrix}
		K_1\\
		K_2
		\end{bmatrix} + O(\varepsilon)}
	{2 + O(\varepsilon)}\chi_{|\theta - \theta'|<\varepsilon}(\theta') d\theta' = O(1)
	\end{equation*}
	as $\varepsilon\rightarrow0$ uniformly in $\theta$.
	
	Next, we will investigate $J_2$. From \eqref{G} and \eqref{g}, it follows that 
	\begin{equation*}
		\begin{bmatrix}
		\cos\theta & \sin\theta
		\end{bmatrix}
		G
		\begin{bmatrix}
		\cos\theta - \cos\theta'\\
		\sin\theta - \sin\theta'
		\end{bmatrix} = 1 - \cos(\theta - \theta') + O(\varepsilon^2)O(|\theta - \theta'|) + O(\varepsilon)O(|\theta - \theta'|^2).
	\end{equation*}
	In the region $\{|\theta - \theta'|>\varepsilon\}$, we can rewrite this
	\begin{equation*}
	\begin{bmatrix}
	\cos\theta & \sin\theta
	\end{bmatrix}
	G
	\begin{bmatrix}
	\cos\theta - \cos\theta'\\
	\sin\theta - \sin\theta'
	\end{bmatrix} = 1 - \cos(\theta - \theta') + O(\varepsilon)O(|\theta - \theta'|^2).
	\end{equation*}
	Similarly,
	\begin{equation*}
		\begin{bmatrix}
		\cos\theta - \cos\theta' & \sin\theta - \sin\theta'
		\end{bmatrix}
		G
		\begin{bmatrix}
		\cos\theta - \cos\theta'\\
		\sin\theta - \sin\theta'
		\end{bmatrix}
		= 2 - 2\cos(\theta - \theta') + O(\varepsilon)O(|\theta - \theta'|^3).
	\end{equation*}
	Therefore, we have
	\begin{align*}
		J_2 &= \frac{1}{2\pi\varepsilon}\left(\frac{1}{2} + \frac{O(\varepsilon)O(|\theta - \theta'|^2)}{2 - 2\cos(\theta - \theta') + O(\varepsilon)O(|\theta - \theta'|^3)}\right)\\
		&= \frac{1}{4\pi\varepsilon} + \frac{O(1)}{\frac{2 - 2\cos(\theta - \theta')}{(\theta - \theta')^2} + O(\varepsilon)O(|\theta - \theta'|)}
	\end{align*}
	Since $(2 - 2\cos(\theta - \theta')) (\theta - \theta')^{-2}$ is a positive and continuous function of $\theta'\in [0,2\pi]$, we conclude that
	\begin{equation*}
		\int_{0}^{2\pi}\left(J_2 - \frac{1}{4\pi\varepsilon}\right)d\theta' = O(1) \qquad \text{as } \varepsilon\rightarrow 0,
	\end{equation*}
	uniformly in $\theta$. The Lemma is proved.
\end{proof}

\section{Narrow capture problem on the surface without boundary}\label{proof of thm without bry}
In this section, we prove Theorem \ref{main}. We start by recalling the formulation of the problem. Let $(X_t, \mathbb{P}_x)$ be the Brownian motion on a boundaryless manifold $M$ starting at $x$, generated by $\Delta_g$. For $x_0\in M$ and  $\varepsilon>0$, let $\Gamma_{\varepsilon} =  B_\varepsilon(x_0)$ be a small geodesic ball centred at fixed point $x_0\in M$.
Denote by $\tau_{\Gamma_{\varepsilon}}$ the first time the Brownian motion $X_t$ hits $\Gamma_{\varepsilon}$, that is
\begin{equation*}
\tau_{\Gamma_{\varepsilon}} := \inf \{ t\geq 0: X_t \in \Gamma_{\varepsilon}\}.
\end{equation*}
We aim to investigate the mean first-passage time and its average:
\begin{equation*}
\mathbb{E}[\tau_{\Gamma_{\varepsilon}} | X_0 = x], 
\qquad |M_\varepsilon|^{-1} \int_{M_\varepsilon} \mathbb{E}[\tau_{\Gamma_{\varepsilon}} | X_0 = x] dvol_g(x).
\end{equation*}
where $M_{\varepsilon}:=M\setminus \Gamma_\varepsilon$. Namely, we want to derive asymptotic expansion for these quantities as $\varepsilon\to 0$. It is known that $\mathbb{E}[\tau_{\Gamma_{\varepsilon}} | X_0 = x]$ satisfies the following boundary value problem, see for instance Appendix A in \cite{NTT2021},
\begin{equation}\label{bvp}
\begin{cases}
\Delta_g u_\varepsilon = - 1 & \text{on } M_{\varepsilon};\\
u_\varepsilon = 0 & \text{on } \partial M_{\varepsilon}=\partial \Gamma_\varepsilon,
\end{cases}
\end{equation}
which gives the compatibility condition
\begin{equation}\label{comp cond}
\int_{\partial \Gamma_\varepsilon}\partial_{\nu}u_{\varepsilon}(y)dvol_h(y) = - |M_{\varepsilon}|.
\end{equation}

To prove Theorem \ref{main}, we will need the following auxiliary result.
\begin{proposition}\label{partial_u}
	Let $u_{\varepsilon}$ be the solution of \eqref{bvp}, then
	\begin{equation*}
	\partial_{\nu}\left.u_{\varepsilon}\right|_{\partial \Gamma_\varepsilon} = -\frac{|M_{\varepsilon}|}{2\pi\varepsilon} + W_\varepsilon.
	\end{equation*}
	for some $W_\varepsilon\in O_{L^{\infty}(\partial \Gamma_\varepsilon)}(1)$ as $\varepsilon\rightarrow 0$.
\end{proposition}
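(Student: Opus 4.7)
The plan is a boundary integral equation approach. First I would derive a representation formula for $u_\varepsilon$ by applying Green's identity on $M_\varepsilon$ to the pair $(u_\varepsilon, E(x,\cdot))$. Using $\Delta_g u_\varepsilon = -1$, $u_\varepsilon|_{\partial\Gamma_\varepsilon}=0$, and the normalization $\int_M E(x,\cdot)\,dvol_g=0$ from \eqref{no boundary green}, one obtains, for $x\in M_\varepsilon$,
\begin{equation*}
u_\varepsilon(x) = C_\varepsilon - I_\varepsilon(x_0,x) + \mathcal{S}_\varepsilon\phi(x), \qquad \mathcal{S}_\varepsilon\phi(x) := \int_{\partial\Gamma_\varepsilon} E(x,y)\phi(y)\,dvol_h(y),
\end{equation*}
where $\phi := \partial_\nu u_\varepsilon|_{\partial\Gamma_\varepsilon}$ is the unknown Neumann datum (with $\nu$ the outward normal of $M_\varepsilon$, i.e.\ the inward normal of $\Gamma_\varepsilon$, as used in Lemma \ref{sing str of E}), and $C_\varepsilon = |M|^{-1}\int_{M_\varepsilon} u_\varepsilon\,dvol_g$ is a constant.

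Since $E$ has exactly the classical logarithmic singularity by Proposition \ref{sing structure E}, the standard single-layer jump relation carries over: $\partial_\nu \mathcal{S}_\varepsilon\phi(x)|_{M_\varepsilon} = \tfrac12\phi(x) + \mathcal{K}_\varepsilon^*\phi(x)$, where $\mathcal{K}_\varepsilon^*\phi(x) := \mathrm{p.v.}\int_{\partial\Gamma_\varepsilon}\partial_{\nu_x} E(x,y)\phi(y)\,dvol_h(y)$. Taking the outward normal derivative of the representation formula as $x \to \partial\Gamma_\varepsilon$ from the $M_\varepsilon$-side therefore produces the boundary integral equation $\tfrac12\phi(x) = -\partial_{\nu_x} I_\varepsilon(x_0,x) + \mathcal{K}_\varepsilon^*\phi(x)$ on $\partial\Gamma_\varepsilon$. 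Substituting the decomposition from Lemma \ref{sing str of E}, namely $\mathcal{K}_\varepsilon^*\phi = \tfrac{1}{4\pi\varepsilon}\int\phi\,dvol_h + \int Q_\varepsilon(\cdot,y)\phi(y)\,dvol_h(y)$, and replacing $\int\phi\,dvol_h$ by $-|M_\varepsilon|$ via the compatibility condition \eqref{comp cond}, the substitution $W_\varepsilon := \phi + \tfrac{|M_\varepsilon|}{2\pi\varepsilon}$ rewrites the equation as
\begin{equation*}
W_\varepsilon(x) = -2\partial_{\nu_x}I_\varepsilon(x_0,x) \;-\; \frac{|M_\varepsilon|}{\pi\varepsilon}\int_{\partial\Gamma_\varepsilon} Q_\varepsilon(x,y)\,dvol_h(y) \;+\; 2\int_{\partial\Gamma_\varepsilon} Q_\varepsilon(x,y) W_\varepsilon(y)\,dvol_h(y).
\end{equation*}

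By Lemma \ref{partial I} the first term on the right-hand side is $O(\varepsilon)$ in $L^\infty(\partial\Gamma_\varepsilon)$, and by Lemma \ref{sing str of E} the second term is $O(1)$ in $L^\infty$ (since $|M_\varepsilon|/(\pi\varepsilon)\cdot O(\varepsilon) = O(1)$). The main obstacle is closing this fixed-point identity for $W_\varepsilon$ in $L^\infty(\partial\Gamma_\varepsilon)$, which requires the sharper operator-norm bound $\sup_{x\in\partial\Gamma_\varepsilon}\int_{\partial\Gamma_\varepsilon} |Q_\varepsilon(x,y)|\,dvol_h(y) = O(\varepsilon)$. I would extract this from the near-diagonal/off-diagonal decomposition already carried out in the proof of Lemma \ref{sing str of E}: both pieces $J_1$ and $J_2 - \tfrac{1}{4\pi\varepsilon}$ analyzed there are $O(1)$ in $d\theta'$-measure, and the factor $\varepsilon$ in $dvol_h = \varepsilon\,d\theta'(1+O(\varepsilon^2))$ supplies the missing smallness. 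Granted this, the map $W \mapsto 2\int Q_\varepsilon(\cdot,y)W(y)\,dvol_h(y)$ is an $L^\infty$-contraction for small $\varepsilon$, and a standard Neumann-series/fixed-point argument yields $\|W_\varepsilon\|_{L^\infty(\partial\Gamma_\varepsilon)} = O(1)$, proving the proposition.
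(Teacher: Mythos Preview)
Your proposal is correct and follows essentially the same route as the paper: derive the representation formula via Green's identity, take the normal derivative, apply the single-layer jump relation, insert the decomposition of Lemma~\ref{sing str of E} together with the compatibility condition \eqref{comp cond}, and close by an absorption/contraction in $L^\infty(\partial\Gamma_\varepsilon)$. The paper phrases the last step as a direct absorption inequality for $\varepsilon\,\partial_\nu u_\varepsilon$ rather than as a Neumann-series argument for $W_\varepsilon$, but the two are equivalent; your observation that one actually needs $\sup_x\int_{\partial\Gamma_\varepsilon}|Q_\varepsilon(x,y)|\,dvol_h(y)=O(\varepsilon)$ (with absolute values), extractable from the $J_1$/$J_2$ analysis in the proof of Lemma~\ref{sing str of E}, is exactly the bound the paper invokes at that step as well.
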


\begin{proof}
	By using Green's identity, we obtain
	\begin{equation}\label{u_partialu_eq}
	\frac{1}{|M|}\int_{M_{\varepsilon}}u_{\varepsilon}(y)dvol_g(y) - u_{\varepsilon}(x) + \int_{\partial \Gamma_\varepsilon}E(x,y)\partial_{\nu}u_{\varepsilon}(y)dvol_h(y) = I_{\varepsilon}(x_0,x).
	\end{equation}
	We take $\partial_{\nu_x}$ and restrict to $\partial \Gamma_\varepsilon$
	\begin{equation*}
	- \partial_{\nu_x}u_{\varepsilon}(x) + \partial_{\nu_x} \int_{\partial \Gamma_\varepsilon}E(x,y)\partial_{\nu}u_{\varepsilon}(y)dvol_h(y) = \partial_{\nu_x}I_{\varepsilon}(x_0,x),
	\end{equation*}
	and hence, by Lemma \ref{partial I}, we derive
	\begin{equation*}
	- \partial_{\nu_x}u_{\varepsilon}(x) + \partial_{\nu_x} \int_{\partial \Gamma_\varepsilon}E(x,y)\partial_{\nu}u_{\varepsilon}(y)dvol_h(y) = O_{L^{\infty}(\partial \Gamma_\varepsilon)}(\varepsilon).
	\end{equation*}
	By Proposition 11.3 of \cite{taylor2},
	\begin{equation*}
	- \frac{1}{2} \partial_{\nu}u_{\varepsilon}(x) +  \int_{\partial \Gamma_\varepsilon}\partial_{\nu_x}E(x,y)\partial_{\nu}u_{\varepsilon}(y)dvol_h(y) = O_{L^{\infty}(\partial \Gamma_\varepsilon)}(\varepsilon).
	\end{equation*}
	Therefore, from Lemma \ref{sing str of E}, it follows
	\begin{equation*}
	\frac{1}{2} \partial_{\nu}u_{\varepsilon}(x) =  \frac{1}{4\pi\varepsilon}\int_{\partial \Gamma_\varepsilon}\partial_{\nu}u_{\varepsilon}(y)dvol_h(y)  + \int_{\partial \Gamma_\varepsilon}Q_{\varepsilon}(x,y)\partial_{\nu}u_{\varepsilon}(y)dvol_h(y)+ O_{L^{\infty}(\partial \Gamma_\varepsilon)}(\varepsilon).
	\end{equation*}
	Hence, the compatibility condition \eqref{comp cond} gives 
	\begin{equation}
\label{epsilonpartialnu}
	\varepsilon \partial_{\nu}u_{\varepsilon}(x) =  - \frac{|M_{\varepsilon}|}{2\pi}   + 2\varepsilon\int_{\partial \Gamma_\varepsilon}Q_{\varepsilon}(x,y)\partial_{\nu}u_{\varepsilon}(y) dvol_h(y) + O_{L^{\infty}(\partial \Gamma_\varepsilon)}(\varepsilon^2).
	\end{equation}
	Next, we estimate
	\begin{eqnarray*}
	\sup_{x\in \partial \Gamma_\varepsilon}\left|\varepsilon\int_{\partial \Gamma_\varepsilon}Q_{\varepsilon}(x,y)\partial_{\nu}u_{\varepsilon}(y)dvol_h(y)\right| 
	&\leq& \sup_{x\in \partial \Gamma_\varepsilon}\left|\varepsilon \partial_{\nu}u_{\varepsilon}\right|
	\sup_{x\in \partial \Gamma_\varepsilon} \int_{\partial \Gamma_\varepsilon}\left|Q_{\varepsilon}(x,y)\right|dvol_h(y)\\
&\leq& C \varepsilon^2\sup_{x\in \partial \Gamma_\varepsilon}\left| \partial_{\nu}u_{\varepsilon}\right|
	\end{eqnarray*}
The last estimate comes from Lemma \ref{sing str of E}. Combine this estimate with \eqref{epsilonpartialnu} we obtain that
	\begin{equation*}
	\varepsilon \partial_{\nu_x}u_{\varepsilon}(x) = - \frac{|M_{\varepsilon}|}{2\pi} + O_{L^{\infty}(\partial \Gamma_\varepsilon)}(\varepsilon).
	\end{equation*}
	This completes the proof.
\end{proof}

\begin{proof}[Proof of Theorem \ref{main}]
	We first prove ii) then proceed with i). By Proposition \ref{partial_u}, we can express
	\begin{equation*}
	\partial_{\nu}\left.u_{\varepsilon}\right|_{\partial \Gamma_\varepsilon} = -\frac{|M_{\varepsilon}|}{2\pi\varepsilon} + W_{\varepsilon}
	\end{equation*}
\begin{eqnarray}
\label{W bound}
\|W_\varepsilon\|_{L^\infty(\partial\Gamma_\epsilon)} \leq C.
\end{eqnarray}
uniformly in $\epsilon>0$. Then, for $x\in M_{\varepsilon}\setminus\partial \Gamma_\varepsilon$, \eqref{u_partialu_eq} gives
	\begin{multline*}
	\frac{1}{|M|}\int_{M_{\varepsilon}}u_{\varepsilon}(y)dvol_g(y) - u_{\varepsilon}(x) - \frac{|M_{\varepsilon}|}{2\pi\varepsilon}\int_{\partial \Gamma_\varepsilon}E(x,y)dvol_h(y) + \int_{\partial \Gamma_\varepsilon}E(x,y)W_{\varepsilon}(y)dvol_h(y)\\
	= I_{\varepsilon}(x_0,x),
	\end{multline*}
	or equivalently 
	\begin{multline}\label{before restriction}
		\frac{1}{|M|}\int_{M_{\varepsilon}}u_{\varepsilon}(y)dvol_g(y) = u_{\varepsilon}(x) + \frac{|M_{\varepsilon}|}{2\pi\varepsilon}\int_{\partial \Gamma_\varepsilon}E(x,x_0)dvol_h(y)\\
		+ \frac{|M_{\varepsilon}|}{2\pi\varepsilon}\int_{\partial \Gamma_\varepsilon}(E(x,y) - E(x,x_0))dvol_h(y)
		-\int_{\partial \Gamma_\varepsilon}E(x,y)W_{\varepsilon}(y)dvol_h(y) + I_{\varepsilon}(x,x_0).
	\end{multline}
	To compute the left-hand side, we restrict this to $\partial \Gamma_\varepsilon$ where $u_{\varepsilon} = 0$. We note that Proposition \ref{sing structure E} combined with Lemma \ref{dstar} shows that in the coordinates \eqref{geod coord} the leading  singuarlity of the Green's function $E(x(s,x_0,), x(t, x_0))$ is of the form
$$ E(x(s,x_0), x(t,x_0)) =C\log|t-s| + L^\infty(\D_\rho\times \D_\rho)$$
Combine this with Lemma \ref{I} and \eqref{W bound} gives
	\begin{equation*}
	\sup_{x\in\partial \Gamma_\varepsilon}\left|\int_{\partial \Gamma_\varepsilon}E(x,y)W_{\varepsilon}(y)dvol_h(y)\right| = O(\varepsilon\log\varepsilon), \qquad \sup_{x\in\partial \Gamma_\varepsilon}\left|I_{\varepsilon}(x_0,x)\right| = O(\varepsilon^2\log\varepsilon)
	\end{equation*}
	as $\varepsilon\rightarrow 0$. Therefore, restricting \eqref{before restriction} to $\partial \Gamma_\varepsilon$ and using Proposition \ref{sing structure E}, we obtain
	\begin{multline}\label{ave u}
		\frac{1}{|M|}\int_{M_{\varepsilon}}u_{\varepsilon}(y)dvol_g(y) = -\frac{|M_{\varepsilon}||\partial \Gamma_\varepsilon|}{4\pi^2\varepsilon}\log\varepsilon +\frac{|M_{\varepsilon}||\partial \Gamma_\varepsilon|}{2\pi\varepsilon}P_{-4}(x_0,x_0)\\
		+ \frac{|M_{\varepsilon}|}{2\pi\varepsilon}\int_{\partial \Gamma_\varepsilon}(E(x,y) - E(x,x_0))dvol_h(y)\left.\right|_{x\in \partial\Gamma_\varepsilon} + O_{L^\infty(\partial \Gamma_{\varepsilon})}(\varepsilon\log\varepsilon).
	\end{multline}
	Let us examine the third term of the right-hand side
	\begin{multline}\label{difference of E}
		\int_{\partial \Gamma_\varepsilon}(E(x,y) - E(x,x_0))dvol_h(y)\\
		 = \int_{\partial \Gamma_\varepsilon}(\log d_g(x,y) - \log d_g(x,x_0))dvol_h(y) + \int_{\partial \Gamma_\varepsilon}(P_{-4}(x,y) - P_{-4}(x,x_0))dvol_h(y),
	\end{multline}
	where $x\in \partial\Gamma_\varepsilon$. Joint differentiability of $P_{-4}$ gives
	\begin{equation}
\label{estimate the smooth part}
	\sup_{x\in \partial\Gamma_\varepsilon}\left|	\int_{\partial \Gamma_\varepsilon}(P_{-4}(x,y) - P_{-4}(x,x_0))dvol_h(y)\right| \leq C\varepsilon^2.
	\end{equation}
	To investigate the first term of the right-hand side of \eqref{difference of E}, we use the coordinate system $x^{\varepsilon}(\cdot,x_0)$ with $x = x^{\varepsilon}(t,x_0)$ and $y = x^{\varepsilon}(s,x_0)$. Let $d\sigma(s)$ be the pull back of the volume form $dvol_h(y)$ under $s\mapsto x^{\varepsilon}(s,x_0)$, then
	\begin{equation*}
	dvol_h(y) = \varepsilon(1+v_\varepsilon(s))d\sigma(s)
	\end{equation*}
	for some smooth function $\|v_{\varepsilon}\|_{L^\infty(\partial\Gamma_\varepsilon)} \leq C\varepsilon$. By Lemma \ref{d inverse} we have
	\begin{equation*}
		d^{-1}_g(x^\varepsilon(t,x_0),x^\varepsilon(s,x_0)) = \varepsilon^{-1}|t - s|^{-1} + \varepsilon |t - s|^{-1}A(\varepsilon, s,r,\omega)
	\end{equation*}
	for some smooth function $A(\epsilon, s, r,\omega)$ in the variables $(\varepsilon,s,r,\omega)\in [0,\varepsilon_0]\times\mathbb{D}\times\mathbb{R}\times S^1$, where $r=|t-s|$ and $\omega = \frac{t-s}{|t-s|}$. Therefore, the first term of \eqref{difference of E} becomes
	\begin{align}\label{difference of ln}
		\nonumber\int_{\partial \Gamma_\varepsilon}(\log d_g(x,y) - \log d_g(x,x_0))dvol_h(y) 
		&=\varepsilon \int_{\partial\mathbb{D}} \log\left(\frac{1}{\varepsilon^{-1}|t - s|^{-1} + \varepsilon |t - s|^{-1}A}\right)(1+v_\varepsilon(s))d\sigma(s)\\
		&=\varepsilon \int_{\partial\mathbb{D}} \log\left(\frac{\varepsilon |t - s|}{1 + \varepsilon^2A}\right)(1+v_\varepsilon(s))d\sigma(s).
	\end{align}
	We have trivially that 
	\begin{equation}
\label{epslogeps term}
		\varepsilon\left| \int_{\partial\mathbb{D}}\log\left(\frac{\varepsilon}{1 + \varepsilon^2 A(\varepsilon, s, r,\omega)}\right) (1+v_\varepsilon(s))d\sigma(s)\right| \leq C\varepsilon\log\varepsilon
	\end{equation}
	and
	\begin{align}
\label{epsilonsquared estimate}
		\left|\varepsilon \int_{\partial\mathbb{D}}\log|t - s| v_\varepsilon(s)d\sigma(s)\right|& \leq \varepsilon\int_{\partial\mathbb{D}} \left|\log|t - s|v_\varepsilon(s)\right|d\sigma(s) \leq C \varepsilon^2
	\end{align}
for all $t\in \partial\D$. Therefore, inserting the estimates \eqref{epslogeps term} and \eqref{epsilonsquared estimate} into \eqref{difference of ln} gives that for $t\in \partial\D$,
	\begin{align*}
		\int_{\partial \Gamma_\varepsilon}(\log d_g(x,y) - \log d_g(x,x_0))dvol_h(y) &=\varepsilon  \int_{\partial\mathbb{D}}\log|t - s|d\sigma(s) +  O_{L^{\infty}(\partial \Gamma_\varepsilon)}(\varepsilon\log\varepsilon)\\
		&=\frac{\varepsilon}{2}\int_{0}^{2\pi} \log(2-2\cos \theta')d\theta'
		+O_{L^{\infty}(\partial \Gamma_\varepsilon)}(\varepsilon\log\varepsilon)\\
		& = O_{L^{\infty}(\partial \Gamma_\varepsilon)}(\varepsilon\log\varepsilon)
	\end{align*}

This combined with \eqref{estimate the smooth part} and \eqref{difference of E} gives 
$$\left|\int_{\partial \Gamma_\varepsilon}(E(x,y) - E(x,x_0))dvol_h(y)\right| \leq C\varepsilon \log\varepsilon.$$
Inserting this estimate into \eqref{ave u} implies
	\begin{multline*}
	\frac{1}{|M|}\int_{M_{\varepsilon}}u_{\varepsilon}(y)dvol_g(y) = -\frac{|M_{\varepsilon}||\partial \Gamma_\varepsilon|}{4\pi^2\varepsilon}\log\varepsilon +\frac{|M_{\varepsilon}||\partial \Gamma_\varepsilon|}{2\pi\varepsilon}P_{-4}(x_0,x_0)\\
	+ O_{L^{\infty}(\partial \Gamma_\varepsilon)}(\varepsilon\log\varepsilon).
	\end{multline*}
	We take the supremum norm over $\partial \Gamma_\varepsilon$ to obtain
	\begin{multline}\label{av}
	\frac{1}{|M|}\int_{M_{\varepsilon}}u_{\varepsilon}(y)dvol_g(y) = -\frac{|M_{\varepsilon}||\partial \Gamma_\varepsilon|}{4\pi^2\varepsilon}\log\varepsilon +\frac{|M_{\varepsilon}||\partial \Gamma_\varepsilon|}{2\pi\varepsilon}P_{-4}(x_0,x_0)\\
	+  O(\varepsilon\log\varepsilon).
	\end{multline}
	Next, we note that $|\Gamma_\varepsilon| = |B_\varepsilon(x_0)| = O(\varepsilon^2)$ and
	\begin{equation*}
		|\partial \Gamma_\varepsilon|=|\partial B_\varepsilon(x_0)| = \int_{\partial B_\varepsilon(x_0)}dvol_h(y) = \int_{\partial \mathbb{D}}\varepsilon (1 + v_{\varepsilon}(s))d\sigma(s) = 2\pi \varepsilon +O (\varepsilon^2).
	\end{equation*}
	This gives us part ii) of Theorem \ref{main}. 
	
	Let us put \eqref{av} into \eqref{u_partialu_eq} to we obtain
	\begin{multline*}
	u_{\varepsilon}(x) = -\frac{|M|}{2\pi}\log\varepsilon + |M_{\varepsilon}|P_{-4}(x_0,x_0) \\
	+\int_{\partial M_{\varepsilon}}E(x,y)\partial_{\nu}u_{\varepsilon}(y)dvol_h(y) - I_{\varepsilon}(x_0,x) + O(\varepsilon\log\varepsilon).
	\end{multline*}
	as $\varepsilon \rightarrow 0$. It remains to show that for a fixed compact set $K\subset\subset M_\varepsilon$,
	\begin{equation}
\label{volume term}
	\int_{\partial \Gamma_\varepsilon}E(x,y)\partial_{\nu}u_{\varepsilon}(y)dvol_h(y) - I_{\varepsilon}(x_0,x) = - |M| E(x,x_0) + r_{\varepsilon}(x).
	\end{equation}
for some $r_\varepsilon(x)$ whose $L^\infty(K)$ norm is of order $\varepsilon$.

To do this, let us fix any compact set $K\subset M$ which does not contain $x_0$, so that $K\cap \Gamma_\varepsilon$ is empty for sufficiently small $\varepsilon>0$. Then $E(\cdot,\cdot)$ is smooth in $K\times \Gamma_\varepsilon$, and hence we estimate
	\begin{equation*}
	\sup_{x\in K} I(x_0,x) = \sup_{x\in K} \int_{\Gamma_\varepsilon} E(x,y) dvol_g(y) = O(\varepsilon^2) \qquad \text{as } \varepsilon \rightarrow 0.
	\end{equation*}
	Next, we write 
	\begin{multline*}
	\int_{\partial \Gamma_\varepsilon}E(x,y)\partial_{\nu}u_{\varepsilon}(y)dvol_h(y) = \int_{\partial \Gamma_\varepsilon}(E(x,y) - E(x,x_0))\partial_{\nu}u_{\varepsilon}(y)dvol_h(y)\\
	+E(x,x_0) \int_{\partial \Gamma_\varepsilon}\partial_{\nu}u_{\varepsilon}(y)dvol_h(y). 
	\end{multline*}
The first integral can be estimated using Proposition \ref{partial_u} and the smoothness of $E(\cdot,\cdot)$ in $K\times \Gamma_\varepsilon$to give

$$ \int_{\partial \Gamma_\varepsilon}E(x,y)\partial_{\nu}u_{\varepsilon}(y)dvol_h(y) = O_{L^\infty(K)}(\varepsilon) +  E(x,x_0) \int_{\partial \Gamma_\varepsilon}\partial_{\nu}u_{\varepsilon}(y)dvol_h(y). $$
Now use the compatibility condition \eqref{comp cond} to get for $x\in K$,
	\begin{equation*}
	 \int_{\partial \Gamma_\varepsilon}E(x,y)\partial_{\nu}u_{\varepsilon}(y)dvol_h(y) = - |M_\varepsilon| E(x,x_0) + O_{L^\infty(K)}(\varepsilon) = - |M| E(x,x_0) + O_{L^\infty(K)}(\varepsilon).
	\end{equation*}
	This gives Part i) of Theorem \ref{main}.
	
\end{proof}

\section{Narrow capture problem on the surface with boundary}\label{proof of thm with bry}
Here, we consider the same problem for the case when the surface has a smooth boundary , $\partial M$, which reflects the particle. Without loss of generality, we assume that $M$ is an connected open subset of a compact orientable Riemannian manifold $(\tilde{M}, g)$ without boundary. Let $\tilde{E}(x,y)$ be the Green's function on $\tilde{M}$, given by \eqref{no boundary green}. The Neumann Green's function $E(x,y)$ is given by, for $x\in M^0$,
\begin{equation}\label{Neumann Green function}
	\begin{cases}
		\Delta_{g,y}E(x,y) = -\delta_x(y) + \frac{1}{|M|}, & \text{for } y\in M,\\
		\partial_{\nu_y}E(x,y)=0, & \text{for } y\in \partial M,\\
		\int_M E(x,y)dvol_g(y) = 0.
	\end{cases}
\end{equation}
We can obtain this function by setting $E = \tilde{E} - C$, where the correction term $C(x,y)$ is the solution to the boundary value problem, for $x\in M^0$,
\begin{equation*}
	\begin{cases}
		\Delta_{g,y}C(x,y) = \frac{1}{|\tilde{M}|} - \frac{1}{|M|}, & \text{for } y\in M,\\
		\partial_{\nu_y}C(x,y) = \partial_{\nu_y}\tilde{E}(x,y), & \text{for } y\in \partial M,\\
		\int_M C(x,y)d_g(y) = \int_M \tilde{E}(x,y)dvol_g(y).
	\end{cases}
\end{equation*}
Therefore, for $U\subset \subset M$ away from the boundary, that is $\mathrm{dist}_g(U, \partial M) > 0$, it follows
\begin{equation*}
	C = \tilde{E} - E \in C^{\infty}(\bar{U} \times \bar{U}).
\end{equation*}
Hence, we can decompose
\begin{equation}\label{dec2}
E(x,y) = -\frac{1}{2\pi} \log d_g(x,y) + P_{-4}(x,y),
\end{equation}
where $P_{-4}(x,y) \in C^1(\bar{U}\times \bar{U})$. Moreover, since we are considering the centre of the trap, $x_0$, to be fixed in the interior of $M$ with a sufficiently small $\varepsilon >0$, we have that 
\begin{equation*}
	\sup_{x\in \partial \Gamma_\varepsilon} \int_{\Gamma_\varepsilon} C(x, y) dvol_g(y) = O(\varepsilon^2),
	\qquad
	\sup_{x\in \partial \Gamma_\varepsilon} \partial_{\nu_x} \int_{\Gamma_\varepsilon} C(x, y) dvol_g(y) = O(\varepsilon^2),
\end{equation*}
\begin{equation*}
	\sup_{x\in \partial \Gamma_\varepsilon} \partial_{\nu_x} \int_{\partial \Gamma_\varepsilon} C(x, y) dvol_h(y) = O(\varepsilon),
\end{equation*}
as $\varepsilon\rightarrow 0$. As a result, we obtain the following analogues of Lemmas \ref{I}-\ref{sing str of E} for the function
\begin{equation*}
	I_\varepsilon(x_0,x) := \int_{\Gamma_\varepsilon}E(x, y) dvol_g(y),
\end{equation*}
\begin{lemma}\label{analogue}
	As $\varepsilon\rightarrow 0$, we have that
	\begin{equation*}
		\sup_{x\in \partial \Gamma_\varepsilon} I_{\varepsilon}(x_0, x) = O(\varepsilon^2 \log\varepsilon),
		\qquad
		\sup_{x\in \partial \Gamma_\varepsilon} \partial_{\nu_x} I(x_0,x) = O(\varepsilon),
	\end{equation*}
	\begin{equation*}
		\partial_{\nu_x} E(x,y)\left.\right|_{x,y\in \partial \Gamma_\varepsilon} = \frac{1}{4\pi\varepsilon} + Q_{\varepsilon}(x,y),
	\end{equation*}
	for some function $Q_\varepsilon$ such that 
	\begin{equation*}
	\sup_{x\in \partial \Gamma_\varepsilon} \int_{\partial \Gamma_\varepsilon}  Q_{\varepsilon}(x,y) dvol_h(y) = O(\varepsilon).
	\end{equation*}
\end{lemma}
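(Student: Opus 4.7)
The plan is to reduce each of the three claims to the corresponding statement in Lemmas \ref{I}, \ref{partial I}, and \ref{sing str of E}, using the splitting $E = \tilde{E} - C$ from the discussion preceding \eqref{dec2}. Since the trap centre $x_0$ is fixed in the interior of $M$, for all sufficiently small $\varepsilon$ one can fix an open set $U$ with $\mathrm{dist}_g(U,\partial M) > 0$ containing $\Gamma_\varepsilon$, and on $\bar U \times \bar U$ the correction $C$ is smooth. In particular, all $C^k$ norms of $C$ on this set are uniformly controlled in $\varepsilon$, which is exactly what makes the three displayed $O$-bounds on $C$ quoted above \eqref{Neumann Green function} available as black-box inputs.

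First, I would write $I_\varepsilon(x_0,x) = \tilde I_\varepsilon(x_0,x) - \int_{\Gamma_\varepsilon} C(x,y)\,dvol_g(y)$, where $\tilde I_\varepsilon$ is built from $\tilde E$ in place of $E$. Applying Lemma \ref{I} on $\tilde M$ yields $\sup_{x\in\partial\Gamma_\varepsilon}\tilde I_\varepsilon(x_0,x) = O(\varepsilon^2\log\varepsilon)$, while the quoted bound on the $C$-integral contributes $O(\varepsilon^2)$; the sum gives the first claim. For the second claim I apply $\partial_{\nu_x}$ term by term: Lemma \ref{partial I} produces $\partial_{\nu_x}\tilde I_\varepsilon = O(\varepsilon)$ uniformly on $\partial\Gamma_\varepsilon$, and the stated $O(\varepsilon^2)$ bound on $\partial_{\nu_x}\int_{\Gamma_\varepsilon} C(x,y)\,dvol_g(y)$ absorbs the correction.

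For the third claim I decompose $\partial_{\nu_x} E(x,y) = \partial_{\nu_x}\tilde E(x,y) - \partial_{\nu_x} C(x,y)$ on $\partial\Gamma_\varepsilon\times\partial\Gamma_\varepsilon$. Lemma \ref{sing str of E} applied on the closed manifold $\tilde M$ extracts the singular piece $\tfrac{1}{4\pi\varepsilon}$ together with a remainder $\tilde Q_\varepsilon$ whose integral over $\partial\Gamma_\varepsilon$ is $O(\varepsilon)$ uniformly in $x$. I would then set $Q_\varepsilon(x,y) := \tilde Q_\varepsilon(x,y) - \partial_{\nu_x} C(x,y)$, interchange integration and differentiation (justified by the smoothness of $C$ on $\bar U \times \bar U$), and invoke the stated bound $\sup_{x\in\partial\Gamma_\varepsilon}\partial_{\nu_x}\int_{\partial\Gamma_\varepsilon} C(x,y)\,dvol_h(y) = O(\varepsilon)$ to obtain the required control on $\int_{\partial\Gamma_\varepsilon} Q_\varepsilon(x,y)\,dvol_h(y)$.

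There is no genuine obstacle in this reduction, because all the singular harmonic analysis near the trap has already been done for the boundaryless case in Section \ref{Green function}; the only thing to be careful about is the uniformity in $\varepsilon$ of the $C$-estimates, and these are immediate from the smoothness of $C$ on $\bar U\times\bar U$ combined with the elementary volume estimates $|\Gamma_\varepsilon| = O(\varepsilon^2)$ and $|\partial\Gamma_\varepsilon| = O(\varepsilon)$.
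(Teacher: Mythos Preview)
Your proposal is correct and follows essentially the same route as the paper: the paper also writes $E=\tilde E - C$, invokes Lemmas \ref{I}--\ref{sing str of E} on the closed ambient manifold $\tilde M$ for the $\tilde E$-contribution, and absorbs the $C$-correction via exactly the three displayed $O$-bounds you cite, which follow from $C\in C^\infty(\bar U\times\bar U)$ together with $|\Gamma_\varepsilon|=O(\varepsilon^2)$ and $|\partial\Gamma_\varepsilon|=O(\varepsilon)$. Your definition $Q_\varepsilon:=\tilde Q_\varepsilon-\partial_{\nu_x}C$ and the accompanying justification are precisely what is needed.
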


Note that, in this case, $\partial M \neq \emptyset$, the mean first-passage time $\mathbb{E}[\tau_{\Gamma_{\varepsilon}} | X_0 = x]$ satisfies the following mixed boundary value problem, see Appendix in \cite{NTT2021},
\begin{equation}\label{bvp2}
	\begin{cases}
		\Delta_g u_\varepsilon = 1 & \text{on } M_\varepsilon,\\
		u_\varepsilon = 0, & \text{on } \partial \Gamma_\varepsilon,\\
		\partial_{\nu} u = 0, & \text{on } \partial M.
	\end{cases}
\end{equation}
which gives the compatibility condition
\begin{equation}\label{comp cond 2}
\int_{\partial \Gamma_\varepsilon}\partial_{\nu}u_{\varepsilon}(y)dvol_h(y) = - |M_{\varepsilon}|.
\end{equation}
As a result, we have the following analogue of Proposition \ref{partial_u}
\begin{proposition}\label{partial_u_2}
	Let $u_{\varepsilon}$ be the solution of \eqref{bvp2}, then
	\begin{equation*}
	\partial_{\nu}\left.u_{\varepsilon}\right|_{\partial \Gamma_\varepsilon} = -\frac{|M_{\varepsilon}|}{2\pi\varepsilon} + W_\varepsilon.
	\end{equation*}
	for some $W_\varepsilon\in O_{L^{\infty}(\partial \Gamma_\varepsilon)}(1)$ as $\varepsilon\rightarrow 0$.
\end{proposition}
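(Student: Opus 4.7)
The plan is to run the proof of Proposition \ref{partial_u} essentially verbatim, substituting the Neumann Green's function of \eqref{Neumann Green function} for the closed-surface Green's function and invoking Lemma \ref{analogue} wherever the original argument used Lemmas \ref{I}, \ref{partial I}, or \ref{sing str of E}. The only new ingredient needed is the verification that Green's identity produces no new boundary contribution on $\partial M$.

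For a fixed $x$ in the interior of $M_\varepsilon$, I would apply Green's second identity to $u_\varepsilon$ and $y\mapsto E(x,y)$ on $M_\varepsilon$. The boundary $\partial M_\varepsilon$ splits as $\partial\Gamma_\varepsilon\sqcup\partial M$; on $\partial M$, the reflecting condition in \eqref{bvp2} gives $\partial_\nu u_\varepsilon = 0$ and the Neumann condition built into \eqref{Neumann Green function} gives $\partial_{\nu_y}E(x,y)=0$, so the $\partial M$ piece of the boundary integral vanishes identically. Combining this with $\Delta_g u_\varepsilon=-1$ on $M_\varepsilon$, $u_\varepsilon|_{\partial\Gamma_\varepsilon}=0$, and the defining equation together with the normalization of $E$ yields
\begin{equation*}
\frac{1}{|M|}\int_{M_\varepsilon}u_\varepsilon(y)\,dvol_g(y)-u_\varepsilon(x)+\int_{\partial\Gamma_\varepsilon}E(x,y)\partial_\nu u_\varepsilon(y)\,dvol_h(y)=I_\varepsilon(x_0,x),
\end{equation*}
an identity formally indistinguishable from \eqref{u_partialu_eq}.

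From here the proof is a direct copy of the boundaryless argument. Differentiating in $\partial_{\nu_x}$, restricting to $\partial\Gamma_\varepsilon$, and bounding the right-hand side by $O_{L^\infty(\partial\Gamma_\varepsilon)}(\varepsilon)$ using the $\partial_{\nu_x}I_\varepsilon$ estimate of Lemma \ref{analogue}, then applying the double-layer jump relation (Proposition 11.3 of \cite{taylor2}) and inserting the expansion $\partial_{\nu_x}E(x,y)|_{\partial\Gamma_\varepsilon\times\partial\Gamma_\varepsilon}=(4\pi\varepsilon)^{-1}+Q_\varepsilon(x,y)$ from Lemma \ref{analogue}, the compatibility condition \eqref{comp cond 2} collapses the leading integral to $-|M_\varepsilon|/(2\pi)$. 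The residual $Q_\varepsilon$ integral is absorbed into the left-hand side using the bound $\sup_x\int|Q_\varepsilon(x,y)|\,dvol_h(y)=O(\varepsilon)$ from Lemma \ref{analogue}, producing a term of order $\varepsilon^2\sup|\partial_\nu u_\varepsilon|$ that closes the estimate. I do not anticipate any real obstacle: the genuine analytic work of transferring the singular expansions from $\tilde E$ to $E$ was done in the lead-in to Lemma \ref{analogue}, so once that lemma is accepted the present proposition is purely mechanical; the main point worth explicitly writing in the proof is the one-line cancellation on $\partial M$ that reduces Green's identity to its boundaryless form.
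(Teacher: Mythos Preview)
Your proposal is correct and mirrors the paper's own proof essentially line by line: Green's identity with the $\partial M$ contribution cancelled by the Neumann conditions on $u_\varepsilon$ and $E$, differentiation and restriction to $\partial\Gamma_\varepsilon$, the jump relation from \cite{taylor2}, insertion of Lemma \ref{analogue}, and the absorption argument via the compatibility condition. The only cosmetic difference is that the paper explicitly splits $E=\tilde E-C$ before invoking the jump relation (so as to apply it to the closed-manifold kernel $\tilde E$, with the smooth correction $C$ contributing no jump), whereas you apply it directly to $E$; since $C$ is smooth near $\partial\Gamma_\varepsilon$ this amounts to the same thing.
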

\begin{proof}
	By the Green's identity,
	\begin{multline*}
	\frac{1}{|M|}\int_{M_{\varepsilon}}u_{\varepsilon}(y)dvol_g(y) - u_{\varepsilon}(x) - I_{\varepsilon}(x_0,x)\\
	= \int_{\partial M_{\varepsilon}}\partial_{\nu_y}E(x,y)u_{\varepsilon}(y)dvol_h(y) - \int_{\partial M_{\varepsilon}}E(x,y)\partial_{\nu}u_{\varepsilon}(y)dvol_h(y).
	\end{multline*}
	Using the boundary condition for both $E$ and $u_\varepsilon$, we obtain
	\begin{equation*}
	\frac{1}{|M|}\int_{M_{\varepsilon}}u_{\varepsilon}(y)dvol_g(y) - u_{\varepsilon}(x) + \int_{\partial \Gamma_\varepsilon}E(x,y)\partial_{\nu}u_{\varepsilon}(y)dvol_h(y) = I_{\varepsilon}(x_0,x).
	\end{equation*}
	Next, as in Proposition \ref{partial_u}, we take $\partial_{\nu_x}$, restrict to $\partial M_\varepsilon$, and use Lemma \ref{analogue} to obtain
	\begin{equation*}
	- \partial_{\nu}u_{\varepsilon}(x) + \partial_{\nu} \int_{\partial \Gamma_\varepsilon}E(x,y)\partial_{\nu}u_{\varepsilon}(y)dvol_h(y) = O_{L^{\infty}(\partial \Gamma_\varepsilon)}(\varepsilon).
	\end{equation*}
	By Proposition 11.3 of \cite{taylor2},
	\begin{equation*}
	- \frac{1}{2} \partial_{\nu}u_{\varepsilon}(x) + \int_{\partial \Gamma_\varepsilon}\partial_{\nu_x}\tilde{E}(x,y)\partial_{\nu}u_{\varepsilon}(y)dvol_h(y) - \int_{\partial \Gamma_\varepsilon}\partial_{\nu_x}C(x,y)\partial_{\nu}u_{\varepsilon}(y)dvol_h(y) = O_{L^{\infty}(\partial \Gamma_\varepsilon)}(\varepsilon)
	\end{equation*}
	so that
	\begin{equation*}
	- \frac{1}{2} \partial_{\nu}u_{\varepsilon}(x) +  \int_{\partial \Gamma_\varepsilon}\partial_{\nu_x}E(x,y)\partial_{\nu}u_{\varepsilon}(y)dvol_h(y) = O_{L^{\infty}(\partial \Gamma_\varepsilon)}(\varepsilon).
	\end{equation*}
	Further, we repeat the steps of the proof of Proposition \ref{partial_u} with replacing Lemma \ref{sing str of E} to Lemma \ref{analogue}.
\end{proof}
Repeating the proof of Theorem \ref{main} by replacing Proposition \ref{partial_u} with \ref{partial_u_2} and Lemmas \ref{I}-\ref{sing str of E} with \ref{analogue} yields the following theorem
\begin{theorem}\label{main2}
	Let $(M,g,\partial M)$ be a compact, connected and orientable Riemannian surface with smooth boundary. Fix $x_0\in M^0$ and let $\Gamma_{\varepsilon}:=B_\varepsilon(x_0)$ be a geodesic ball centered at $x_0$ of geodesic radius $\varepsilon >0$ such that $\partial \Gamma_\varepsilon \cap \partial M = \emptyset$ .\\
	\\
	i) For each $x\notin \Gamma_\varepsilon$, the first-passage time satisfies the  following asymptotic formula, as $\varepsilon\rightarrow 0$,
	\begin{multline*}
	\mathbb{E}[\tau_{\Gamma_{\varepsilon}} | X_0 = x] = -\frac{|M|}{2\pi}\log\varepsilon + |M|P_{-4}(x_0,x_0)
	- |M| E(x,x_0) + r_{\varepsilon}(x) + O(\varepsilon\log\varepsilon).
	\end{multline*}
	for some function $r_\varepsilon$ such that $\|r_\varepsilon\|_{C(K)} \leq C_{K} \varepsilon$ for any compact $K\subset M$ for which $K\cap \Gamma_\varepsilon = \emptyset$. The Neumann Green's function $E(x,y)$ is given by \eqref{Neumann Green function} and $P_{-4}(x_0,x_0)$ is the evaluation at $(x,y)=(x_0,x_0)$ of the kernel $P_{-4}(x,y)$ in \eqref{dec2}.\\
	\\
	ii) Let $M_\varepsilon=M\setminus \Gamma_{\varepsilon}$, then the spatial average of the mean first-passage time satisfies the asymptotic formula, as $\varepsilon\rightarrow 0$,
	\begin{equation*}
	\frac{1}{|M|}\int_{M_{\varepsilon}}\mathbb{E}[\tau_{\Gamma_{\varepsilon}} | X_0 = y]dvol_g(y) = -\frac{|M|}{2\pi}\log\varepsilon +|M|P_{-4}(x_0,x_0)+  O(\varepsilon\log\varepsilon).
	\end{equation*}
\end{theorem}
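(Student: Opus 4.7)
The plan is to repeat the proof of Theorem \ref{main} essentially verbatim, substituting the Neumann Green's function defined in \eqref{Neumann Green function} for the one used in Section \ref{proof of thm without bry}, replacing Lemmas \ref{I}--\ref{sing str of E} by their unified analogue Lemma \ref{analogue}, and replacing Proposition \ref{partial_u} by its boundary counterpart Proposition \ref{partial_u_2}. The starting point is Green's identity applied to $u_\varepsilon$ (which solves \eqref{bvp2}) and $y\mapsto E(x,y)$ on the domain $M_\varepsilon$, whose boundary decomposes as $\partial M \cup \partial \Gamma_\varepsilon$. The Neumann conditions $\partial_\nu u_\varepsilon = 0$ and $\partial_{\nu_y} E(x,y) = 0$ on $\partial M$ annihilate the $\partial M$ contribution, so the identity reduces to exactly the form \eqref{u_partialu_eq} used in the closed case, but with all boundary integrals now supported on $\partial \Gamma_\varepsilon$ alone.

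From this point the argument is mechanical. I would first substitute the normal derivative $\partial_\nu u_\varepsilon|_{\partial \Gamma_\varepsilon} = -|M_\varepsilon|/(2\pi\varepsilon) + W_\varepsilon$ from Proposition \ref{partial_u_2} into the reduced Green's identity, then split the boundary integral using the diagonal decomposition \eqref{dec2}. Restricting to $x\in\partial \Gamma_\varepsilon$, where $u_\varepsilon$ vanishes, yields an expression for the spatial average $|M|^{-1}\int_{M_\varepsilon} u_\varepsilon\, dvol_g$, after the three quantities $\int_{\partial \Gamma_\varepsilon}(E(x,y)-E(x,x_0))\,dvol_h(y)$, $I_\varepsilon(x_0,x)$, and $\int_{\partial \Gamma_\varepsilon} E(x,y) W_\varepsilon(y)\,dvol_h(y)$ are estimated exactly as in the closed case via the local coordinate calculations of Section \ref{Green function}. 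Using $|\partial \Gamma_\varepsilon| = 2\pi\varepsilon + O(\varepsilon^2)$ and $|M_\varepsilon| = |M| + O(\varepsilon^2)$ then gives part ii). Part i) follows by substituting the computed spatial average back into the identity at a generic $x$ in a compact set $K\subset M$ disjoint from $\Gamma_\varepsilon$; on $K\times\Gamma_\varepsilon$ the function $E(x,y)$ is smooth, so writing $E(x,y) = E(x,x_0) + [E(x,y)-E(x,x_0)]$ in the boundary integral and applying the compatibility condition \eqref{comp cond 2} produces the $-|M|E(x,x_0)$ term together with a $C(K)$-remainder of order $\varepsilon$.

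The only conceptual point requiring verification is that the local machinery of Section \ref{Green function} transfers intact to the Neumann setting; this is exactly what Lemma \ref{analogue} provides, together with the $C^\infty$-smoothness of the correction $C = \tilde{E} - E$ on a fixed neighbourhood of $x_0$, which is guaranteed because $x_0 \in M^0$ is bounded away from $\partial M$ and the trap $\Gamma_\varepsilon$ is entirely contained in the interior for $\varepsilon$ small. The principal obstacle is bookkeeping rather than analysis: one must check that each invocation of Lemmas \ref{I}--\ref{sing str of E} in the original proof is covered by the correct clause of Lemma \ref{analogue}, and that Proposition \ref{partial_u_2} is invoked in place of Proposition \ref{partial_u} at the single step where the leading asymptotic $-|M_\varepsilon|/(2\pi\varepsilon) + O(1)$ of $\partial_\nu u_\varepsilon$ enters. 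Once this substitution pattern is verified, no new estimate is required and the argument closes exactly as in Section \ref{proof of thm without bry}.
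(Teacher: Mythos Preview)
Your proposal is correct and matches the paper's approach exactly: the paper's proof of Theorem \ref{main2} consists of the single sentence ``Repeating the proof of Theorem \ref{main} by replacing Proposition \ref{partial_u} with \ref{partial_u_2} and Lemmas \ref{I}--\ref{sing str of E} with \ref{analogue} yields the following theorem,'' and you have accurately spelled out the details of that substitution, including the key observation that the Neumann conditions on $\partial M$ annihilate the outer-boundary contribution in Green's identity so that the argument reduces to \eqref{u_partialu_eq}.
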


\begin{appendix}
\section{Proof of Proposition \ref{sing structure E}}
\label{proof of lemma 3.1}
In this section we provide a brief outline for the necessary aspects of the theory of pseudo-differential operators. For a greater in-depth description of $\mathit{\Psi DO}$, we refer to the reader to \cite{Hor3}, \cite{taylor2} or \cite{doi:10.1142/4047}. After the basic elements of $\mathit{\Psi DO}$s have been described, we offer a proof for Proposition \ref{sing structure E}. 
\subsection{Overview of Pseudo-differential operators ($\mathit{\Psi DO}$) on Manifolds}
Let $p(x,\xi) \in C^\infty (T^*\mathbb{R}^n)$. We call $p(x,\xi)$ a standard symbol of order $m$ if for all $m\in \mathbb{R}$, the following estimate holds uniformly 
\begin{align*}
    |D^\alpha_x D^\beta_\xi p(x,\xi)| \lesssim \langle\xi\rangle^{m-|\beta|}
\end{align*}
for every multi-index $\alpha,\beta\in \mathbb{N}^n$. Should $p(x,\xi)$ be an order $m$ standard symbol, we say that $p(x,\xi) \in S^m_{1,0}(T^*\mathbb{R}^n)$. 
\begin{Rem}
    We use $D_\xi := -i \partial_\xi, D_x := -i\partial_x$ and $\langle\xi\rangle:= (1+|\xi|^2)^{1/2}$. 
\end{Rem}
Of particular interest is a subspace of $S^m_{1,0}(T^*\mathbb{R}^n)$ known as the \emph{classical symbols of order $m$} denoted by $S^m_{cl}(T^*\mathbb{R}^n)$. Such symbols are defined via a homogeneity requirement on the asymptotic expansion of $p(x,\xi)$ 
\begin{equation}
    p(x,\xi) \sim \sum_{j=0}^\infty p_{m-j}(x,\xi). \label{homogeneous expansion}
\end{equation}
where $p_{m-j}(x,\xi)$ are homogeneous of order $m-j$ in the fiber for all $x\in \mathbb{R}^n$. i.e. $p_{m-j}(x,\lambda \xi) = \lambda^{m-j}p_{m-j}(x,\xi)$ for $\lambda, |\xi|\geq 1$. The above expansion \eqref{homogeneous expansion} is an asymptotic expansion in the sense that 
\begin{align*}
    p(x,\xi) - \sum_{j=0}^N p_{m-j}(x,\xi) \in S^{m-N-1}_{1,0}(T^*\mathbb{R}^n).
\end{align*}
If $p(x,\xi)\in S^m_{1,0}(T^*\mathbb{R}^n)$, we can define an operator $p(x,D):C_c^\infty(\mathbb{R}^n) \rightarrow \mathcal{D}'(\mathbb{R}^n)$ which is given locally by the following expression
\begin{equation}
    p(x,D)v:=\int_{\mathbb{R}^n} e^{i\xi\cdot x} p(x,\xi) \widehat{v}(\xi) d\xi. \label{local psido rep}
\end{equation}
Such an operator is called an $m$-th order pseudo-differential operator and we say that $p(x,D) \in \Psi^m_{1,0}(\mathbb{R}^n)$. We can also define $\Psi^m_{cl}(\mathbb{R}^n)$ by requiring $p(x,\xi)\in S^m_{cl}(T^*\mathbb{R}^n)$ in \eqref{local psido rep}. Furthermore, we can uniquely extend $p(x,D)$ to a bounded linear operator $p(x,D):H^k(\mathbb{R}^n) \rightarrow H^{k-m}(\mathbb{R}^n)$ for $k\in \mathbb{R}$. We also define the space of \emph{smoothing operators}, pseudo-differential operators with smooth kernels \emph{along the diagonal} as 
\begin{align*}
    \Psi^{-\infty}(\mathbb{R}^n) := \bigcup_{m\in \mathbb{R}} \Psi^m(\mathbb{R}^n).
\end{align*}
Smoothing operators arise as pseudo-differential operators of symbols belonging to the space defined by 
\begin{align*}
    S^{-\infty}(T^*\mathbb{R}^n) = \bigcup_{m\in \mathbb{R}} S^m(T^*\mathbb{R}^n).
\end{align*}
We also have that if $p(x,D)\in \Psi^m(\mathbb{R}^n)$ and $q(x,D)\in \Psi^l(\mathbb{R}^n)$, then $p(x,D)q(x,D) \in \Psi^{m+l}(\mathbb{R}^n)$. The way such composition is defined is via a \emph{symbol calculus}. The symbol for $p(x,D)q(x,D)$, denoted by $(p\#q)(x,\xi)$ is given by 
\begin{equation}
    (p\#q)(x,\xi) \sim \sum_\mu \frac{i^{|\mu|}}{\mu!} D_\xi^\mu p(x,\xi)D_x^\mu q(x,\xi). \label{product formula}
\end{equation}
where $\mu\in \mathbb{N}^n$ denotes a multi-index. The derivation for this formula can be found in \cite{taylor2}, Chapter 7, Section 3. Another important aspect of pseudo-differential operators, which will largely be used in the proof of proposition 3.1 is the notion of elliptic parametrices. First, if $p(x,D)\in \Psi^m(\mathbb{R}^n)$, we say that $p(x,D)$ is elliptic if the following lower bound estimate holds for constants $C,R>0$
\begin{align*}
    |p(x,\xi)| \geq C(1+|\xi|)^m, \;\ \text{for} |\xi| \geq R.
\end{align*}
If $p(x,D)$ is elliptic, then the following theorem holds 
\begin{theorem}
    If $p(x,D)\in \Psi^m(\mathbb{R}^n)$ is elliptic, then there exists a $q(x,D),\tilde{q}(x,D)\in \Psi^{-m}(\mathbb{R}^n)$ such that 
    \begin{align*}
        p(x,D)q(x,D) &= I + \Psi^{-\infty}(\mathbb{R}^n) \\
        \tilde{q}(x,D)p(x,D) &= I + \Psi^{-\infty}(\mathbb{R}^n)
    \end{align*}
\end{theorem}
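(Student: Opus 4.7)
The plan is to build a right parametrix $q(x,D) \in \Psi^{-m}(\mathbb{R}^n)$ by constructing its symbol as an asymptotic series whose leading term is $1/p$ and whose successive corrections annihilate lower-order errors in the symbol calculus \eqref{product formula}. The left parametrix $\tilde q$ is then obtained either by the symmetric construction on $q \# p$ using that the formal transpose of $p(x,D)$ is again elliptic, or by showing any right parametrix is automatically a left parametrix modulo $\Psi^{-\infty}$ via associativity of composition.

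\textbf{Base step.} Fix a cutoff $\chi\in C^\infty(\mathbb{R}^n)$ with $\chi(\xi)=0$ for $|\xi|\le R$ and $\chi(\xi)=1$ for $|\xi|\ge 2R$, where $R$ is the ellipticity threshold, and set $q_{-m}(x,\xi) := \chi(\xi)/p(x,\xi)$. Ellipticity gives $|p(x,\xi)|\gtrsim \langle\xi\rangle^m$ on the support of $\chi$, and differentiating the quotient one verifies inductively that $q_{-m}\in S^{-m}(T^*\mathbb{R}^n)$. Applying \eqref{product formula}, the leading term of $p\# q_{-m}$ is $p\cdot q_{-m}=\chi$, and all higher terms lie in $S^{-1}$; since $1-\chi$ is compactly supported in $\xi$ it belongs to $S^{-\infty}$, so $p\# q_{-m} = 1 + s_{-1}$ with $s_{-1}\in S^{-1}$.

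\textbf{Iterative step.} Assume we have $Q_N = \sum_{k=0}^N q_{-m-k}$ with $q_{-m-k}\in S^{-m-k}$ and $p\# Q_N = 1 - s_{-N-1}$ for some $s_{-N-1}\in S^{-N-1}$. Define $q_{-m-N-1} := q_{-m}\,s_{-N-1}\in S^{-m-N-1}$. Then by \eqref{product formula} the leading term of $p\# q_{-m-N-1}$ is $\chi\, s_{-N-1}$, which equals $s_{-N-1}$ modulo $S^{-\infty}$, and all remaining terms lie in $S^{-N-2}$. Hence
\begin{equation*}
p \# (Q_N + q_{-m-N-1}) = 1 - s_{-N-1} + s_{-N-1} + S^{-N-2} = 1 + S^{-N-2},
\end{equation*}
closing the induction. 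By Borel's asymptotic summation lemma for the symbol classes $S^m_{1,0}$, there exists $q\in S^{-m}(T^*\mathbb{R}^n)$ with $q\sim\sum_{k\ge 0} q_{-m-k}$, meaning $q-Q_N \in S^{-m-N-1}$ for every $N$. Consequently $p\# q - 1 \in \bigcap_N S^{-N}= S^{-\infty}$, so $p(x,D)q(x,D) = I + T_R$ with $T_R\in \Psi^{-\infty}(\mathbb{R}^n)$.

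\textbf{Left parametrix.} The formal transpose $p^t(x,D)$ is again in $\Psi^m_{1,0}(\mathbb{R}^n)$ and remains elliptic (its principal symbol agrees with $p(x,\xi)$ up to sign conventions in $\xi$). Running the construction above on $p^t$ yields a right parametrix for $p^t$, whose transpose is a left parametrix $\tilde q(x,D)\in \Psi^{-m}(\mathbb{R}^n)$ with $\tilde q(x,D) p(x,D) = I + T_L$, $T_L\in \Psi^{-\infty}$. Alternatively, given any right parametrix $q$ and any left parametrix $\tilde q$, the identity $\tilde q \#(p\# q) = (\tilde q \# p)\# q$ combined with $p\# q \sim 1$ and $\tilde q\# p \sim 1$ gives $\tilde q = q$ modulo $\Psi^{-\infty}$, so in fact a single parametrix serves on both sides.

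\textbf{Main obstacle.} The principal technical point is the bookkeeping in two places: first, verifying that each iteratively defined $q_{-m-k}$ genuinely lies in $S^{-m-k}$ (which requires Leibniz-type estimates for repeated division by $p$, controlled by the ellipticity lower bound), and second, invoking the asymptotic summation lemma carefully so that the tail $q - Q_N$ really improves by one order at each stage uniformly in all seminorms. Once these are in hand, the conclusion that $p\# q - 1\in S^{-\infty}$ and hence $p(x,D)q(x,D)-I$ is a smoothing operator is immediate.
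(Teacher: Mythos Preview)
Your construction is correct and is the standard iterative parametrix argument. However, the paper does not actually prove this theorem: immediately after stating it, the authors simply write that the proof can be found in \cite{taylor2}, Chapter~7, Section~4, and move on. Your write-up is essentially the argument given there (define $q_{-m}=\chi/p$, correct the remainder order by order, asymptotically sum via Borel, and deduce the left parametrix either by transposition or by the $\tilde q(pq)=(\tilde qp)q$ trick), so there is no genuine methodological difference to discuss---you have supplied what the paper outsources to a reference.
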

The proof for the above theorem can be found in \cite{taylor2}, Chapter 7, Section 4. The operators $q(x,D),\tilde{q}(x,D)$ are known as right and left parametrices of $p(x,D)$ respectively. Furthermore, it is a straightforward corollary that $q(x,D) = \tilde{q}(x,D)+\Psi^{-\infty}(\mathbb{R}^n)$.
\\~\\
In lieu of the pseudo-differential theory on $\mathbb{R}^n$, there is a natural extension to $C^\infty$-manifolds. Let $M$ be a closed manifold. An operator $A:C^\infty(M)\rightarrow \mathcal{D}'(M)$ is said to belong to $\Psi^m_{1,0}(M)$ if there is an atlas $(U_j,\varphi_j)$ covering $M$, with $\varphi_j :U_j \rightarrow V_j\subset \mathbb{R}^n$ and a partition of unity $\{\chi_j\}$ subordinate to the atlas covering such that the following operator
\begin{align*}
    u\mapsto (\chi_kA\chi_j\varphi_j^*u)\circ \varphi_k^{-1}.
\end{align*}
belongs to $\Psi^m_{1,0}(\mathbb{R}^n)$. Similarly, if $a\in C^\infty(T^*M)$, we say that $a\in S^m_{1,0}(T^*M)$ if 
\begin{align*}
    \chi_j\circ \varphi_j^{-1}a(\varphi_j^{-1}(\cdot),\varphi_j^*\cdot) \in S^m_{1,0}(T^*\mathbb{R}^n).
\end{align*}
The classical pseudo-differential operators and symbols on $M$ are defined in the same way. 
\subsection{Proof of Proposition 3.1}
Since $\Delta_g\in \Psi^2_{cl}(M)$ elliptic, we have that as a result of Theorem A.2, there is a parametrix $P\in \Psi^{-2}_{cl}(M)$ satisfying the following equation 
\begin{align*}
    \Delta_gP = I +\Psi^{-\infty}(M)
\end{align*}
Furthermore, as a corollary of Borel's lemma ,\cite{doi:10.1142/4047}, we can express the Schwartz kernel of $P$ as 
\begin{align*}
    P(x,y) = \sum_{j=0}^\infty P_{-2-j}(x,y) 
\end{align*}
where $P_{-2-j}\in \Psi^{-2-j}_{cl}(M)$. A standard first order parametrix construction indicates that we can choose for $x$ near $y$
\begin{align*}
    P_{-2}(x,y) = -\frac{1}{2\pi}\log d_g(x,y) 
\end{align*}
So, our claim is that $P_{-3} = 0$. This problem, reduces to showing that 
\begin{equation}
    P-P_{-2} \in \Psi^{-4}_{cl}(M) \label{Reduction 1}
\end{equation}
Left composition of $\Delta_g$ with \eqref{Reduction 1} results in the following equivalent formulation
\begin{equation}
    \Delta_g P_{-2} -I \in \Psi^{-2}_{cl}(M) \label{Reduction 2}
\end{equation}
Self-adjointness of $P_{-2}$ and $\Delta_g$imply that \eqref{Reduction 2} is equivalent to 
\begin{equation}
    P_{-2}\Delta_g - I \in \Psi^{-2}_{cl}(M) \label{Adjoint}
\end{equation}
Should \eqref{Adjoint} be true, then we would infer that the expansion for $P_{-2}\Delta_g$ consists of no $-1$ order pseudo-differential operator. This is equivalent to requiring the principle symbol, which is homogeneous of degree $-1$ satisfy the following 
\begin{align*}
    \sigma_{-1}(P_{-2}\Delta_g-I)(y,\eta) = 0, \,\ \text{for all} \,\ (y,\eta)\in T^*M
\end{align*}
In order to attain the above requirement, we show that $\sigma_{-1}(P_{-2}\Delta_g-I)$ can be bounded from above in the following manner
\begin{equation}
    |\sigma_{-1}(P_{-2}\Delta_g-I)(y_0,\tau \eta_0)| \lesssim \tau^{-2} \label{Estimate for principle symbol}
\end{equation}
for $\tau \rightarrow \infty$ and fixed $(y_0,\eta_0)\in S^*M$. Since the decay is radially symmetric and is independent of the choice of $y_0$, \eqref{Estimate for principle symbol} implies that $\sigma_{-1}(P_{-2}\Delta_g-I)$ vanishes on $T^*M$. Now, we let $\Phi:V \rightarrow U$ be a Riemann normal co-ordinate chart, centered at $y_0$ for which $\Phi(0) = y_0\in U\subset M$. Let $A:C_c^\infty(\mathbb{R}^2)\rightarrow \mathcal{D}'(\mathbb{R}^2)$ and $B:C_c^\infty(\mathbb{R}^2)\rightarrow \mathcal{D}'(\mathbb{R}^2)$ denote the pull-back operators for $P_{-2}$ and $\Delta_g$ by $\Phi$ respectively. Then, by the invariance of principle symbols under symplectomorphism, we have that
\begin{align*}
    \sigma_{-1}(P_{-2}\Delta_g - I)(y_0,\eta_0) = \sigma_{-1}(AB-I)(0,\xi) 
\end{align*}
If $a(t,\xi)$ and $b(t,\xi)$ denote the symbols of $A$ and $B$, then by \eqref{product formula}, we have that 
\begin{align*}
    (a\#b)(t,\xi) = a(t,\xi)b(t,\xi) -i\sum_{|\mu|=1} D^\mu_\xi a(t,\xi) D^\mu_t b(t,\xi) + S^{-2}_{cl}(T^*\mathbb{R}^2)
\end{align*}
Furthermore, if we restrict $t=0$, since we are working in Riemannian normal co-ordinates, we have that $\left.D^\mu_t b(t,\xi)\right\vert_{t=0}=0$, which is shown in \cite{lee}, and thus
\begin{align*}
    (a\#b)(0,\xi) = a(0,\xi)b(0,\xi) + S^{-2}_{cl}(T^*\mathbb{R}^2)
\end{align*}
Since the symbol $a(0,\xi)$ is given by the Schwartz kernel of $A$, where 
\begin{align*}
    a(0,\xi) = -\frac{1}{2\pi} \int_{\mathbb{R}^2} e^{-i\xi\cdot t} \log|t| dt = |\xi|^{-2} 
\end{align*}
we have that 
\begin{align*}
    (a\#b)(0,\xi) = 1 + S^{-2}_{cl}(T^*\mathbb{R}^2)
\end{align*}
This implies that 
\begin{align*}
    |(a\#b)(0,\xi)-1| \lesssim \langle\xi\rangle^{-2}\implies \sigma_{-1}(AB-I)(0,\xi) = 0
\end{align*}
The last equality thus implies that $\sigma_{-1}(P_{-2}\Delta_g-I)(y,\eta) = 0$ for all $(y,\eta)\in T^*M$. 
\end{appendix}

\bibliographystyle{plain}
\bibliography{references}

\begin{thebibliography}{10}

\bibitem{AKK2012}
Habib Ammari, Kostis Kalimeris, Hyeonbae Kang, and Hyundae Lee.
\newblock Layer potential techniques for the narrow escape problem.
\newblock {\em J. Math. Pures Appl. (9)}, 97(1):66--84, 2012.

\bibitem{benichou2008narrow}
O~B{\'e}nichou and R~Voituriez.
\newblock Narrow-escape time problem: time needed for a particle to exit a
  confining domain through a small window.
\newblock {\em Physical review letters}, 100(16):168105, 2008.

\bibitem{B20212d}
P.~C. Bressloff.
\newblock Asymptotic analysis of extended two-dimensional narrow capture
  problems.
\newblock {\em Proc. A.}, 477(2246):Paper No. 20200771, 17, 2021.

\bibitem{bressloff2014stochastic}
Paul~C Bressloff.
\newblock {\em Stochastic processes in cell biology}, volume~41.
\newblock Springer, 2014.

\bibitem{B20213d}
Paul~C. Bressloff.
\newblock Asymptotic analysis of target fluxes in the three-dimensional narrow
  capture problem.
\newblock {\em Multiscale Model. Simul.}, 19(2):612--632, 2021.

\bibitem{BEW2008}
Paul~C. Bressloff, Berton~A. Earnshaw, and Michael~J. Ward.
\newblock Diffusion of protein receptors on a cylindrical dendritic membrane
  with partially absorbing traps.
\newblock {\em SIAM J. Appl. Math.}, 68(5):1223--1246, 2008.

\bibitem{bressloff2015escape}
Paul~C Bressloff and Sean~D Lawley.
\newblock Escape from subcellular domains with randomly switching boundaries.
\newblock {\em Multiscale Modeling \& Simulation}, 13(4):1420--1445, 2015.

\bibitem{bressloff2013stochastic}
Paul~C Bressloff and Jay~M Newby.
\newblock Stochastic models of intracellular transport.
\newblock {\em Reviews of Modern Physics}, 85(1):135, 2013.

\bibitem{bressloff2022narrow}
Paul~C Bressloff and Ryan~D Schumm.
\newblock The narrow capture problem with partially absorbing targets and
  stochastic resetting.
\newblock {\em Multiscale Modeling \& Simulation}, 20(2):857--881, 2022.

\bibitem{CW2011}
A.~F. Cheviakov and M.~J. Ward.
\newblock Optimizing the principal eigenvalue of the {L}aplacian in a sphere
  with interior traps.
\newblock {\em Math. Comput. Modelling}, 53(7-8):1394--1409, 2011.

\bibitem{cheviakov2010asymptotic}
Alexei~F Cheviakov, Michael~J Ward, and Ronny Straube.
\newblock An asymptotic analysis of the mean first passage time for narrow
  escape problems: {P}art {II}: The sphere.
\newblock {\em Multiscale Modeling \& Simulation}, 8(3):836--870, 2010.

\bibitem{cheviakov2011optimizing}
Alexei~F Cheviakov and Michael~Jeffrey Ward.
\newblock Optimizing the principal eigenvalue of the {L}aplacian in a sphere
  with interior traps.
\newblock {\em Mathematical and Computer Modelling}, 53(7-8):1394--1409, 2011.

\bibitem{CSW2009}
Daniel Coombs, Ronny Straube, and Michael Ward.
\newblock Diffusion on a sphere with localized traps: mean first passage time,
  eigenvalue asymptotics, and {F}ekete points.
\newblock {\em SIAM J. Appl. Math.}, 70(1):302--332, 2009.

\bibitem{delgado2015conditional}
Monica~I Delgado, Michael~Jeffrey Ward, and Daniel Coombs.
\newblock Conditional mean first passage times to small traps in a 3-d domain
  with a sticky boundary: applications to {T} cell searching behavior in lymph
  nodes.
\newblock {\em Multiscale Modeling \& Simulation}, 13(4):1224--1258, 2015.

\bibitem{fernandes2019cell}
Ricardo~A Fernandes, Kristina~A Ganzinger, Justin~C Tzou, Peter J{\"o}nsson,
  Steven~F Lee, Matthieu Palayret, Ana~Mafalda Santos, Alexander~R Carr, Aleks
  Ponjavic, Veronica~T Chang, et~al.
\newblock A cell topography-based mechanism for ligand discrimination by the t
  cell receptor.
\newblock {\em Proceedings of the National Academy of Sciences},
  116(28):14002--14010, 2019.

\bibitem{HS2014}
D.~Holcman and Z.~Schuss.
\newblock The narrow escape problem.
\newblock {\em SIAM Rev.}, 56(2):213--257, 2014.

\bibitem{holcman2004escape}
David Holcman and Z~Schuss.
\newblock Escape through a small opening: receptor trafficking in a synaptic
  membrane.
\newblock {\em Journal of Statistical Physics}, 117(5):975--1014, 2004.

\bibitem{holcman2015stochastic}
David Holcman and Zeev Schuss.
\newblock Stochastic narrow escape in molecular and cellular biology.
\newblock {\em Analysis and Applications. Springer, New York}, 2015.

\bibitem{Hor3}
Lars H\"{o}rmander.
\newblock {\em The analysis of linear partial differential operators. {III}}.
\newblock Classics in Mathematics. Springer, Berlin, 2007.
\newblock Pseudo-differential operators, Reprint of the 1994 edition.

\bibitem{iyaniwura2021asymptotic}
Sarafa Iyaniwura and MJ~Ward.
\newblock Asymptotic analysis for the mean first passage time in finite or
  spatially periodic 2{D} domains with a cluster of small traps.
\newblock {\em The ANZIAM Journal}, 63(1):1--22, 2021.

\bibitem{iyaniwura2021simulation}
Sarafa Iyaniwura, Tony Wong, Michael~J Ward, and Colin~B Macdonald.
\newblock Simulation and optimization of mean first passage time problems in
  2-{D} using numerical embedded methods and perturbation theory.
\newblock {\em Multiscale Modeling \& Simulation}, 19(3):1367--1393, 2021.

\bibitem{IWMW2021}
Sarafa~A. Iyaniwura, Tony Wong, Colin~B. Macdonald, and Michael~J. Ward.
\newblock Optimization of the {M}ean {F}irst {P}assage {T}ime in {N}ear-{D}isk
  and {E}lliptical {D}omains in 2-{D} with {S}mall {A}bsorbing {T}raps.
\newblock {\em SIAM Rev.}, 63(3):525--555, 2021.

\bibitem{kolokolnikov2005optimizing}
Theodore Kolokolnikov, Michele~S Titcombe, and Michael~J Ward.
\newblock Optimizing the fundamental neumann eigenvalue for the {L}aplacian in
  a domain with small traps.
\newblock {\em European Journal of Applied Mathematics}, 16(2):161--200, 2005.

\bibitem{KTCW2015}
Venu Kurella, Justin~C. Tzou, Daniel Coombs, and Michael~J. Ward.
\newblock Asymptotic analysis of first passage time problems inspired by
  ecology.
\newblock {\em Bull. Math. Biol.}, 77(1):83--125, 2015.

\bibitem{lee}
John~M. Lee.
\newblock {\em Introduction to {R}iemannian manifolds}, volume 176 of {\em
  Graduate Texts in Mathematics}.
\newblock Springer, Cham, 2018.
\newblock Second edition of [ MR1468735].

\bibitem{lefeuvretensor}
Thibault Lefeuvre.
\newblock {\em Tensor Tomography for Surfaces. 2018}.
\newblock PhD thesis, Master thesis.

\bibitem{LL1986}
Lauffenburger~DA. Linderman~JJ.
\newblock Analysis of intracellular receptor/ligand sorting. calculation of
  mean surface and bulk diffusion times within a sphere.
\newblock {\em Biophys J.}, 50(2):295--305, 1986.

\bibitem{LBW2017}
Alan~E. Lindsay, Andrew~J. Bernoff, and Michael~J. Ward.
\newblock First passage statistics for the capture of a {B}rownian particle by
  a structured spherical target with multiple surface traps.
\newblock {\em Multiscale Model. Simul.}, 15(1):74--109, 2017.

\bibitem{lindsay2016hybrid}
Alan~E Lindsay, Ryan~T Spoonmore, and Justin~C Tzou.
\newblock Hybrid asymptotic-numerical approach for estimating
  first-passage-time densities of the two-dimensional narrow capture problem.
\newblock {\em Physical Review E}, 94(4):042418, 2016.

\bibitem{lindsay2017optimization}
Alan~E Lindsay, JC~Tzou, and Theodore Kolokolnikov.
\newblock Optimization of first passage times by multiple cooperating mobile
  traps.
\newblock {\em Multiscale Modeling \& Simulation}, 15(2):920--947, 2017.

\bibitem{madrid2020competition}
Jacob~B Madrid and Sean~D Lawley.
\newblock Competition between slow and fast regimes for extreme first passage
  times of diffusion.
\newblock {\em Journal of Physics A: Mathematical and Theoretical},
  53(33):335002, 2020.

\bibitem{NTT2021force}
Medet Nursultanov, William Trad, and Leo Tzou.
\newblock Narrow escape problem in the presence of the force field.
\newblock {\em ARXIVMath Meth Appl Sci.}, pages 1--25, 2022.

\bibitem{NTT2021}
Medet Nursultanov, Justin~C. Tzou, and Leo Tzou.
\newblock On the mean first arrival time of {B}rownian particles on
  {R}iemannian manifolds.
\newblock {\em J. Math. Pures Appl. (9)}, 150:202--240, 2021.

\bibitem{pillay2010asymptotic}
Samara Pillay, Michael~Jeffrey Ward, Anthony Peirce, and Theodore Kolokolnikov.
\newblock An asymptotic analysis of the mean first passage time for narrow
  escape problems: Part {I}: {T}wo-dimensional domains.
\newblock {\em Multiscale Modeling \& Simulation}, 8(3):803--835, 2010.

\bibitem{RC2019}
Wesley J.~M. Ridgway and Alexei~F. Cheviakov.
\newblock Locally and globally optimal configurations of $n$ particles on the
  sphere with applications in the narrow escape and narrow capture problems.
\newblock {\em Phys. Rev. E}, 100:042413, Oct 2019.

\bibitem{schuss2007narrow}
Zeev Schuss, Amit Singer, and David Holcman.
\newblock The narrow escape problem for diffusion in cellular microdomains.
\newblock {\em Proceedings of the National Academy of Sciences},
  104(41):16098--16103, 2007.

\bibitem{Singer2006}
A.~Singer, Z.~Schuss, and D.~Holcman.
\newblock Narrow escape, part iii: Non-smooth domains and riemann surfaces.
\newblock {\em Journal of Statistical Physics}, 122(3):491--509, Feb 2006.

\bibitem{singer2008narrow}
Amit Singer, Z~Schuss, and D~Holcman.
\newblock Narrow escape and leakage of {B}rownian particles.
\newblock {\em Physical Review E}, 78(5):051111, 2008.

\bibitem{singer2006narrow}
Amit Singer, Zeev Schuss, David Holcman, and Robert~S Eisenberg.
\newblock Narrow escape, part i.
\newblock {\em Journal of Statistical Physics}, 122(3):437--463, 2006.

\bibitem{SWF2007}
Ronny Straube, Michael~J. Ward, and Martin Falcke.
\newblock Reaction rate of small diffusing molecules on a cylindrical membrane.
\newblock {\em J. Stat. Phys.}, 129(2):377--405, 2007.

\bibitem{taylor2}
Michael~E. Taylor.
\newblock {\em Partial differential equations {II}. {Q}ualitative studies of
  linear equations}, volume 116 of {\em Applied Mathematical Sciences}.
\newblock Springer, New York, second edition, 2011.

\bibitem{TW2000}
Mich\`ele~S. Titcombe and Michael~J. Ward.
\newblock An asymptotic study of oxygen transport from multiple capillaries to
  skeletal muscle tissue.
\newblock {\em SIAM J. Appl. Math.}, 60(5):1767--1788, 2000.

\bibitem{tzou2015mean}
Justin~C Tzou and Theodore Kolokolnikov.
\newblock Mean first passage time for a small rotating trap inside a reflective
  disk.
\newblock {\em Multiscale Modeling \& Simulation}, 13(1):231--255, 2015.

\bibitem{ward1993strong}
Micheal~J Ward and Joseph~B Keller.
\newblock Strong localized perturbations of eigenvalue problems.
\newblock {\em SIAM Journal on Applied Mathematics}, 53(3):770--798, 1993.

\bibitem{doi:10.1142/4047}
M~W Wong.
\newblock {\em An Introduction to Pseudo-Differential Operators}.
\newblock WORLD SCIENTIFIC, 2nd edition, 1999.

\end{thebibliography}

\setlength{\parskip}{0pt}

\end{document}